\documentclass[12pt,reqno]{amsart}
\usepackage{amsmath,amssymb,amsthm,mathtools}
\usepackage[margin=2.5cm]{geometry}
\usepackage{hyperref}

\theoremstyle{plain}
\newtheorem{theorem}{Theorem}[section]
\newtheorem{lemma}[theorem]{Lemma}
\newtheorem{proposition}[theorem]{Proposition}
\newtheorem{corollary}[theorem]{Corollary}

\theoremstyle{definition}
\newtheorem{definition}[theorem]{Definition}

\theoremstyle{remark}
\newtheorem{remark}[theorem]{Remark}

\DeclareMathOperator{\scal}{scal}
\DeclareMathOperator{\supp}{supp}
\DeclareMathOperator{\Acw}{\widehat{A}\text{-}\mathrm{cw}_2}
\DeclareMathOperator{\Acwp}{\widehat{A}^+\text{-}\mathrm{cw}_2}
\DeclareMathOperator{\Lip}{Lip}
\DeclareMathOperator{\id}{Id}

\begin{document}

\title[Spectral Geroch conjecture and noncompact enlargeable summands]{Spectral Geroch conjecture and noncompact area enlargeable summands}

\author[Daoqiang Liu]{Daoqiang Liu}

\address{Chern Institute of Mathematics \& LPMC, Nankai University, Tianjin 300071, China}
\email{\href{mailto:dqliu@nankai.edu.cn}{dqliu@nankai.edu.cn}}
\urladdr{\href{https://www.dqliu.cn}{www.dqliu.cn}}

\subjclass[2020]{53C21, 53C27, 53C23, 58J30}
\keywords{scalar curvature, area enlargeable manifold, spectral Geroch conjecture, $\widehat{A}$-cowaist}

\begin{abstract}
We prove that the connected sum of a possibly noncompact area enlargeable manifold $M_1$ with an arbitrary spin manifold $M_2$ of the same dimension admits no complete Riemannian metric of uniformly positive scalar curvature. This extends a theorem of Wang--Zhang, where $M_1$ is assumed closed, to noncompact enlargeable summands; in this generality the uniform positivity hypothesis enters the argument in an essential way. We also prove a spectral analogue of the generalized Geroch conjecture in terms of the $\gamma$-spectral constant: for $\gamma>(\dim M_1-1)/(4\dim M_1)$, such a connected sum carries no complete metric with positive $\gamma$-spectral constant. The proofs are based on a covering connected sum construction together with the scalar-cowaist and spectral-cowaist inequalities.
\end{abstract}

\maketitle

\section{Introduction}

A basic problem in differential geometry is to determine which smooth manifolds admit complete Riemannian metrics of positive scalar curvature. A classical theorem of Gromov and Lawson \cite{GL80,GL83} asserts that an area enlargeable manifold admits no such metric. The precise notion of area enlargeability used here is recalled in Definition~\ref{def:enlargeable}; it agrees with that of Gromov--Lawson up to later refinements by Cecchini--Schick \cite{CS21}, Shi \cite{Shi25} and Su \cite{Su26}. For closed manifolds, area enlargeability is a topological condition: it is invariant under homotopy equivalence and is inherited from the target of any map of nonzero degree; in particular it holds for tori and, more generally, for any manifold admitting a map of nonzero degree to a torus \cite{GL80,GL83}. Noncompact examples abound as well: by \cite[Proposition~6.7]{GL83} the product of a closed enlargeable manifold with the real line $\mathbf{R}$ is area enlargeable, and so is any hyperbolic manifold of finite volume (see \cite[\S 6]{GL83}).

The Geroch conjecture asserts that the $m$-dimensional torus $T^m$ admits no metric of positive scalar curvature. Schoen and Yau \cite{SY79} proved this for $3\le m\le 7$ by minimal hypersurface techniques, and Gromov and Lawson \cite{GL83} settled it in all dimensions by index theory. A natural generalization, which we call the \emph{generalized Geroch conjecture}, states that for any $m$-dimensional manifold $M$, the connected sum $T^m\# M$ admits no complete metric of positive scalar curvature. Chodosh and Li \cite{CL24} proved this for $3\le m\le 7$ by the $\mu$-bubble method; the case $m=3$ was also treated by Lesourd--Unger--Yau \cite{LUY24}.

In the spin setting, the conjecture was resolved in full generality by Wang and Zhang \cite{WZ22}.

\begin{theorem}[Wang--Zhang \cite{WZ22}]\label{thm:WZ}
Let $W$ be a closed area enlargeable manifold in the sense of Gromov and Lawson and let $M$ be an arbitrary spin manifold of the same dimension. Then the connected sum $W\# M$ admits no complete Riemannian metric of positive scalar curvature.
\end{theorem}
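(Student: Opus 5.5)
We argue by contradiction: suppose $g$ is a complete metric on $X = W\# M$ with $\scal_g>0$. Following Wang--Zhang, the plan has two parts. First, we pass to a covering construction that concentrates the enlargeable geometry away from the $M$-summand. Second, we run a relative index argument with a cutoff, using a Callias/deformed Dirac operator so that noncompactness of $M$ causes no trouble.

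\textbf{Step 1 (quantitative reduction).} Fix the connected sum region, a small ball $B\subset W$ whose complement embeds isometrically in $X$ near the neck. Since $W$ is closed, $\scal_g\ge \kappa_0>0$ on the compact set $K=X\setminus (M\setminus B')$, i.e.\ on the copy of $W\setminus B$ together with the neck. On the $M$ side, $\scal_g>0$ pointwise but possibly decaying.

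By area enlargeability of $W$, for every $\epsilon>0$ there is a covering $\widetilde W_\epsilon\to W$ (spin) and a map $f_\epsilon:\widetilde W_\epsilon\to S^n$ that is $\epsilon$-area contracting, constant outside a compact set, and of nonzero degree. Since $f_\epsilon$ is constant near the lifts of $B$, we may take it to be constant on each lift of $B$.

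\textbf{Step 2 (covering connected sum).} Form $\widetilde X_\epsilon$ by gluing a copy of $M\setminus B'$ to each lift of $B$ in $\widetilde W_\epsilon$. This manifold is spin, since $M$ is spin and $\widetilde W_\epsilon$ is spin after passing to a further cover or using the spin condition on the relevant piece. It carries the lifted complete metric $\tilde g$ with $\scal>0$, and $f_\epsilon$ extends by constants to a map $\tilde f_\epsilon:\widetilde X_\epsilon\to S^n$ that is still $\epsilon$-area contracting and of nonzero degree. It is locally constant outside a region that, modulo deck-type repetitions, lies in the lifts of $K$, where $\scal\ge\kappa_0$. The key point is that $d\tilde f_\epsilon$ is supported where the scalar curvature is uniformly bounded below.

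\textbf{Step 3 (index contradiction).} Pull back the Bott bundle $E$ over $S^n$ (or use $E\oplus$ trivial in odd dimension after crossing with $S^1$) and twist the spin Dirac operator by $\tilde f_\epsilon^*E$ minus a trivial bundle. The curvature term $\mathcal R^E$ satisfies $\|\mathcal R^E\|\le C_n\epsilon$ and is supported in $\supp d\tilde f_\epsilon$. We then apply the Gromov--Lawson relative index theorem on the complete manifold. The relative index equals a nonzero multiple of $\deg f_\epsilon$, which is nonzero. On the other hand, the Lichnerowicz formula gives
\[
D^2 = \nabla^*\nabla + \tfrac{\scal}{4} + \mathcal R^E,
\]
and where $\mathcal R^E\neq0$ we have $\scal/4\ge\kappa_0/4 > C_n\epsilon$ for $\epsilon$ small. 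Elsewhere the two twisted operators coincide with positive $\scal$.

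The main obstacle is exactly this: the relative index theorem needs invertibility at infinity, but $\scal$ on the $M$ ends may tend to $0$. I would handle it by the Wang--Zhang deformation. Add to the operator a potential $\phi(x)$, for instance a Callias-type term built from a proper distance function on the $M$ ends, or apply a conformal change $\tilde g' = \rho^2 \tilde g$ that keeps the metric complete and makes the operator positive at infinity without touching $K$. With this deformation the operator becomes Fredholm and the vanishing argument goes through.

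Combining the nonzero index with the vanishing forced by positivity gives the contradiction. If $\widetilde W_\epsilon$ is infinite, I would instead use the relative index over the compact support of $d\tilde f_\epsilon$, so that only finitely many ends matter.
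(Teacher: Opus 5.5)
\textbf{There is a genuine gap.} Your architecture (covering connected sum, the bound $\scal_g\ge\kappa_0$ on the compact piece $W\setminus\mathring B$, twisted Dirac operators) matches what the paper attributes to Wang--Zhang. However, two steps do not hold as written.

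\textbf{Step~1: constancy on the lifts of $B$.} You assert that $f_\epsilon$ is constant near the lifts of $B$, but nothing guarantees this. The map $f_\epsilon$ is only constant outside some compact set, and that set may contain lifts of $B$. For a finite covering, e.g.\ the standard covers of $T^m$, it contains all of them, and $f_\epsilon$ is typically nonconstant there. Simply redefining $f_\epsilon$ on a ball can destroy both the area bound and the degree. Without constancy on every lift, you cannot extend by constants over the attached copies of $M$.

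The missing device is the collapsing map of Step~3 in Section~\ref{sec:proof-main}. Precompose with the lift $\widehat h$ of a map $h\simeq\id$ that sends a ball $U\Supset B$ to its centre and is the identity outside a slightly larger ball $V$. Since $c=\Lip(h)^2$ is independent of $\epsilon$, the map $f_\epsilon\circ\widehat h$ has the following properties:
\begin{itemize}
\item it is $c\epsilon$-area decreasing;
\item it is constant on each lift of $U$, which also puts its differential where the lift of your metric on $W$ agrees with the lifted $g$;
\item it has the same degree, because the homotopy stays locally constant outside one fixed compact set (a compact set meets only finitely many sheets over $V$).
\end{itemize}

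\textbf{Step~3: the analytic core is deferred.} This is the more serious problem. You name the real difficulty: there is no positive lower bound for $\scal$ on the copies of $M$, so the Gromov--Lawson relative index theorem does not apply. But you do not resolve it, and ``the vanishing argument goes through'' is precisely the content of the theorem.

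\emph{The conformal option.} It would require a complete conformal metric with $\scal$ uniformly positive at infinity, and no general procedure provides one. For $m\ge3$, a paraboloid times a flat torus is a complete metric of positive scalar curvature on $\mathbf R^2\times T^{m-2}$. Yet the torical band-width inequality rules out any complete metric on it whose scalar curvature is uniformly positive outside a compact set.

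\emph{The Callias option.} This is left unspecified exactly where it matters.
\begin{itemize}
\item The potential must be an odd, parallel endomorphism $\widehat V$ of $S\widehat{\otimes}(f^*E\oplus\mathbf C^r)$ that anticommutes with Clifford multiplication; a scalar function will not do. It is built from the identification $f^*E\cong\mathbf C^r$, which exists only where $f$ is locally constant.
\item The cross term $\lambda\,c(d\varphi)\widehat V$ must be absorbed where the cutoff $\varphi$ switches on. What matters is not growth along a proper distance function but \emph{where} the switch happens.
\item It works if the transition is placed in the lifts of the annulus $U\setminus B$. These lie over the compact set $W\setminus\mathring B$, where $\scal\ge\kappa_0$, and you take a fixed coupling $\lambda$ with $\lambda\sup|d\varphi|<\kappa_0/4$.
\item On the copies of $M$, where $\varphi\equiv1$, the bound $\scal/4+\lambda^2\ge\lambda^2$ needs only $\scal\ge0$. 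You then conclude by comparing with the analogous operator for the trivial bundle, whose index vanishes by symmetry.
\end{itemize}

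\emph{Your closing remark.} Restricting to the compact support of $d\tilde f_\epsilon$ does not help. Fredholmness requires control outside every compact set, and a single noncompact copy of $M$ already spoils invertibility at infinity.

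\textbf{Simplest repair.} This is also the route the paper describes. Rescale so that $\scal\ge m(m-1)$ on $\supp(d\tilde f_\epsilon)$ (strictly if $m$ is odd) with $\tilde f_\epsilon$ area decreasing. Then invoke Zhang's noncompact Llarull theorem (Theorem~\ref{thm:zhang}), whose conclusion $\inf\scal<0$ contradicts $\scal>0$.
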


When $W=T^m$, Theorem~\ref{thm:WZ} is the spin case of the generalized Geroch conjecture. The proof combines deformed Dirac operators with a covering construction that produces an area decreasing map of nonzero degree, and it uses the compactness of $W$ in an essential way: compactness provides a uniform positive lower bound for the scalar curvature on $W\setminus B$, away from the connected sum neck, to which the noncompact version of Llarull's theorem due to Zhang \cite{Zha20} applies. Recently, Su \cite{Su26} gave an alternative proof of Theorem~\ref{thm:WZ}.

There has also been recent progress on the generalized Geroch conjecture without any spin assumption. Building on the generic regularity theory for minimizing hypersurfaces of Chodosh--Mantoulidis--Schulze--Wang \cite{CMSW25}, Bi--Hao--He--Shi--Zhu \cite{BHHSZ26} proved the Riemannian positive mass theorem in dimensions up to $19$ and deduced the Geroch conjecture up to dimension $12$: for every closed manifold $X$ of dimension $m\le 12$, the connected sum $T^m\# X$ admits no complete metric of positive scalar curvature, and any complete metric of nonnegative scalar curvature on it is flat. In the spectral direction, Chai and Sun \cite{CS26} established rigidity theorems for spectral curvature bounds, including a spectral form of the Geroch conjecture allowing arbitrary ends, by means of warped $\mu$-bubbles. These results all concern a closed summand; the purpose of the present note is to allow a noncompact area enlargeable summand.

In this note we allow the enlargeable summand to be noncompact. Our main result is the following.

\begin{theorem}\label{thm:main}
Let $M_1$ be an $m$-dimensional, possibly noncompact, area enlargeable manifold in the sense of Definition~\ref{def:enlargeable} and let $M_2$ be an arbitrary spin manifold of the same dimension. Then the connected sum $M_1\# M_2$ admits no complete Riemannian metric of uniformly positive scalar curvature.
\end{theorem}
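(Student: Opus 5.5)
We argue by contradiction. Suppose $g$ is a complete metric on $X=M_1\# M_2$ with $\scal_g\ge \kappa>0$ everywhere; after rescaling we may take $\kappa = m(m-1)$, the scalar curvature of the round unit sphere. The plan follows the covering connected sum strategy of Wang--Zhang, with two changes. Compactness of $W$ is replaced by the uniform lower bound $\scal_g\ge\kappa$. The noncompact Llarull theorem of Zhang is replaced by the scalar-cowaist inequality, which is to be stated later in the paper. That inequality should say roughly the following: if a complete spin manifold $Y$ (possibly with extra ends or incomplete regions, provided the relevant map is locally constant there) has $\scal\ge\kappa$ on the support of $d f$, where $f\colon Y\to S^m$ is an $\epsilon$-area contracting map of nonzero degree, then $\epsilon$ is bounded below by a constant $c(m,\kappa)>0$. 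Equivalently, $\Acw(Y)<\infty$ quantitatively.

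First, fix $\epsilon>0$ small. Area enlargeability of $M_1$ (Definition~\ref{def:enlargeable}) gives a covering $\widetilde M_1\to M_1$, which we take to be spin, and an $\epsilon$-area contracting map $f\colon\widetilde M_1\to S^m$ of nonzero degree that is constant outside a compact set $K$. Here the area contraction is measured with respect to a fixed background metric $g_1$ on $M_1$ lifted to the cover. The connected sum is formed inside a small ball $B\subset M_1$.

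Second, we perform the covering connected sum. Cut out all lifts of $B$ meeting a neighbourhood of $K$; by shrinking $B$ or moving it we may arrange that $f$ is constant on each such lift. Glue a copy of $M_2\setminus B'$ into each of these holes, and also glue copies at the remaining lifts, so that the result $\widetilde X$ is a covering of $X$. Alternatively, leave those holes and treat them as regions where $f$ is constant; either way $\widetilde X$ is spin near $\supp df$, since $\widetilde M_1$ is. The map $f$ extends by constants over the glued pieces. The lifted metric $\tilde g$ is complete with $\scal_{\tilde g}\ge\kappa$.

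Third, we compare $\tilde g$ with the background metric on $\supp df$. Since $\supp df$ may not be compact in $M_1$, the key point is that no bi-Lipschitz comparison is needed. Instead, we replace $g_1$ from the start by $g$ itself restricted to $M_1\setminus B$, and we check that the definition of area enlargeability permits any complete metric. For a noncompact manifold this is exactly where completeness and the choice of metric matter. If the definition fixes a specific metric, we invoke the stated invariance of area enlargeability under uniformly equivalent metrics, or use that $g$ and $g_1$ can be chosen to agree outside a compact neighbourhood of the neck. The scalar-cowaist inequality applied to $(\widetilde X,\tilde g,f)$ then gives $\epsilon\ge c(m,\kappa)$. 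Letting $\epsilon\to0$ yields the contradiction.

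The main obstacle is the third step. We must ensure that the enlargeability maps are area contracting with respect to the given metric $g$, and not merely some fixed metric on the noncompact $M_1$. We also need the index-theoretic inequality to hold on the noncompact, possibly non-spin-at-infinity covering $\widetilde X$. Uniform positivity is used precisely here: it supplies the lower bound on all of $\supp df$ without any compactness. The Callias/relative-index argument behind the cowaist inequality only needs invertibility of the deformed Dirac operator at infinity, and $\scal\ge\kappa$ together with completeness provides that.
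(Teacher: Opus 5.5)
There is a genuine gap in your second step. The assertion that ``by shrinking $B$ or moving it we may arrange that $f$ is constant on each such lift'' is unjustified, and false in general.

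The order of choices already rules it out. In your third step $g_1$ is built from $g$ on $M_1\setminus B$. So $B$, and with it the region where $g_1\neq g$, must be fixed \emph{before} the covering and $f$ are produced, and $f$ depends on that choice. Even for a fixed $f$, no suitable ball need exist. For $M_1=T^m$ with the standard maps on large covers, $\supp(df)$ contains lifts of every point of $T^m$. Area contraction also gives no Lipschitz control, so $f$ is not even approximately constant on small balls.

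Without $f$ being constant near every lift of the neck, you cannot extend it by constants over the glued copies of $M_2\setminus\mathring B_2$. Your third step fails as well: the lifted $g$ and the lift of $g_1$ differ exactly on the lifts of the neck region, and $\supp(df)$ may meet that region.

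The missing idea is the Wang--Zhang collapsing map, which the paper uses.
\begin{enumerate}
\item Fix nested balls $B_1\Subset U\Subset V$ and let $g_1$ agree with $g$ on $M_1\setminus U$.
\item Take $h\colon M_1\to M_1$ with $h|_U\equiv p_0$ and $h=\id$ outside $V$, homotopic to the identity through maps that are the identity outside $V$.
\item The constant $c:=\Lip_{g_1}(h)^2$ depends only on these fixed data. Only afterwards is $\varepsilon<\delta_0/(c\,m(m-1))$ chosen.
\item Lift $h$ sheetwise over the evenly covered ball $V$ to get $\widehat h$, and set $\widehat f:=f\circ\widehat h$.
\end{enumerate}
This $\widehat f$ is $c\varepsilon$-area decreasing. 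It is constant on every lift of $U$, so it extends by constants over the $M_2$ copies, and $\supp(d\widehat f)$ lies where the two lifted metrics coincide. It also has the same nonzero degree as $f$. That homotopy invariance needs each $f\circ\widehat H_t$, with $\widehat H_t$ the lifted homotopy, to be locally constant outside one fixed compact set. This in turn uses that $K$ meets only finitely many sheets over $V$.

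Two smaller points:
\begin{itemize}
\item Shi's upper bound $\Acw\le 2/\sigma^2$ requires $\scal\ge\sigma^2m(m-1)$ on the whole of a \emph{complete} spin manifold, not just on $\supp(df)$. With only the latter, Zhang's theorem yields merely $\inf\scal<0$. So your ``leave the holes'' alternative, which produces an incomplete manifold, is not available.
\item The covering connected sum is spin everywhere, since both the cover of $M_1$ and $M_2$ are spin. No non-spin region arises.
\end{itemize}
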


Here a metric $g$ is said to have \emph{uniformly positive scalar curvature} if $\scal_g\ge\delta_0$ for some constant $\delta_0>0$.

For closed $M_1$, Theorem~\ref{thm:WZ} already gives the stronger conclusion that no complete metric of positive scalar curvature exists. The content of Theorem~\ref{thm:main} is therefore the noncompact case, where the uniform positivity hypothesis plays an essential role in the argument: it provides a global lower bound independent of the (a priori uncontrolled) support of the area decreasing map.

\begin{remark}\label{rem:open}
We do not know whether the uniform positivity hypothesis in Theorem~\ref{thm:main} can be relaxed to pointwise positive scalar curvature: does there exist a noncompact area enlargeable manifold $M_1$ and a spin manifold $M_2$ such that $M_1\# M_2$ admits a complete metric of positive scalar curvature?
\end{remark}

The same construction also yields an obstruction in terms of the $\gamma$-spectral constant introduced by Hirsch--Kazaras--Khuri--Zhang \cite{HKKZ24}. For a connected oriented Riemannian $m$-manifold $(M,g)$, not necessarily complete, and $\gamma\in\mathbf{R}$, set
    \[
        \Lambda_{\gamma}(g)=
        \inf\Bigl\{ \int_M \bigl(|\nabla u|^2 + \gamma\,\scal_g \,u^2\bigr)\,dV
            \colon u\in H_0^1(M),\ \int_M u^2\,dV =1 \Bigr\},
    \]
where $H_0^1(M)$ is the completion of $C_c^\infty(M)$ in the Sobolev $H^1$-norm. We consider the range $\gamma>\frac{m-1}{4m}$.

\begin{theorem}\label{thm:main_spectral}
Under the assumptions of Theorem~\ref{thm:main}, the connected sum $M_1\# M_2$ admits no complete Riemannian metric with positive $\gamma$-spectral constant, for any $\gamma>\frac{m-1}{4m}$.
\end{theorem}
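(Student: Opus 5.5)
We argue by contradiction and follow the proof of Theorem~\ref{thm:main} closely, replacing the pointwise lower bound $\scal_g\ge\delta_0$ by a spectral one. Suppose $g$ is a complete metric on $M=M_1\# M_2$ with $\Lambda_\gamma(g)=\lambda_0>0$ for some $\gamma>\frac{m-1}{4m}$.

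\textbf{Step 1: a positive eigenfunction and a conformal warping.} Since $\lambda_0>0$, a standard exhaustion argument (solving Dirichlet problems on an increasing family of compact domains and normalizing at a point) gives a smooth positive function $u$ on $M$ with
\[
-\Delta u+\gamma\,\scal_g\,u\ \ge\ \lambda_0 u .
\]
The plan is to feed $u$ into the spectral version of the Dirac operator estimate, as in Hirsch--Kazaras--Khuri--Zhang: use a Schrödinger--Lichnerowicz formula twisted by $u$, or equivalently work with the warped/conformal metric $\tilde g = u^{2/(m-1)}g$ or the warped product $M\times_{u^{\alpha}}S^1$. The threshold $\gamma>\frac{m-1}{4m}$ is exactly the condition under which the Cauchy--Schwarz step $|\langle\nabla u,\cdot\rangle|$ versus $|D\psi|^2\ge\frac{m}{m-1}|\nabla\psi|^2$ (the Friedrich/Kato refinement) closes. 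The result of this step should be an inequality of the form
\[
\int_M |D\psi|^2 \;\ge\; c(\gamma,m)\,\lambda_0\int_M|\psi|^2
\]
for compactly supported spinors, possibly twisted by a flat or almost flat bundle, with $c(\gamma,m)>0$. This is the spectral substitute for $\scal_g\ge\delta_0$.

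\textbf{Step 2: covering connected sum and the spectral-cowaist inequality.} Next I would apply the covering connected sum construction used for Theorem~\ref{thm:main}. Area enlargeability of $M_1$ gives, for each $\varepsilon>0$, a covering $\widehat M_1$ and an $\varepsilon$-area-contracting map $f\colon \widehat M_1\to S^m$ that is constant near infinity and has nonzero degree. I would lift this to the corresponding covering of $M_1\# M_2$, gluing in copies of $M_2$, and extend $f$ by a constant over the $M_2$ pieces. Because $M_2$ is spin and the $M_1$-part is spin on the relevant region, we can twist the spinor bundle by $f^*$ of the Bott bundle. The index of the resulting twisted Dirac operator is nonzero, by a relative index theorem in the style of Gromov--Lawson and Zhang. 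This yields a nonzero $\widehat{A}$-cowaist. The spectral-cowaist inequality then says that the twisted operator has a kernel element unless the curvature term, which is of order $\varepsilon$, exceeds $c(\gamma,m)\lambda_0$. Taking $\varepsilon\to0$ contradicts Step~1. Note that the lifted metric has the same spectral constant lower bound, since $\Lambda_\gamma$ does not decrease under passage to coverings when one uses compactly supported test functions pulled back locally. In fact the pointwise differential inequality for $u$ lifts directly, and that is all we need.

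\textbf{Main obstacle.} The delicate point is Step~1 in the noncompact setting. We must produce $u$ and run the integration by parts with $u$-weights on a complete but unbounded manifold. We must also check that the support of the twist, which is not controlled, causes no problem. As in Theorem~\ref{thm:main}, this is handled because the spectral bound is global, with $\lambda_0$ independent of $f$. I would first try to establish the weighted Lichnerowicz estimate with $\psi$ replaced by $u^{\beta}\psi$, choosing $\beta$ so that the cross terms are absorbed precisely when $\gamma>\frac{m-1}{4m}$. Completeness is used to justify the cutoff arguments and the Fredholmness/relative index of the twisted operator.
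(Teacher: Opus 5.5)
\textbf{There is a genuine gap, and it is in the step you treat as routine.}

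Area enlargeability is a property of metrics on $M_1$, but your metric $g$ lives on $M_1\# M_2$. You never say for which metric $f$ is $\varepsilon$-area decreasing, nor why that bound holds for $\widehat g=\widehat\pi^{*}g$ on the covering connected sum. Moreover, ``extend $f$ by a constant over the $M_2$ pieces'' is not possible in general. The map $f$ is only locally constant outside a compact set $K$, and it may vary on the lifted balls $B_1^{(p')}$ that meet $K$. So $f|_{\partial B_1^{(p')}}$ need not extend by a constant over the attached copy of $M_2\setminus\mathring B_2$.

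The paper resolves both issues with one device:
\begin{itemize}
\item It extends $g|_{M_1\setminus U}$ to a complete metric $g_1$ on $M_1$ and applies enlargeability to $g_1$.
\item It fixes, \emph{before} choosing $\varepsilon$, a collapsing map $h$ with $h|_U\equiv p_0$ and $h=\id$ outside $V$. It then replaces $f$ by $\widehat f=f\circ\widehat h$, where $\widehat h$ is the sheetwise lift of $h$.
\item Now $\widehat f$ is constant on every $U^{(p')}$, so it extends over the $M_2$-copies. Its differential is supported where $\widehat g_1=\widehat g$, so the area bound transfers to $\widehat g$.
\item The price is a factor $c=\Lip_{g_1}(h)^2$. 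It is independent of $\varepsilon$ and of the covering, because $\pi$ is a local isometry.
\item Finally, $\deg\widehat f=\deg f$ because the homotopy $f\circ\widehat H_t$ is locally constant outside a single compact set $K'$. This uses that $K$ meets only finitely many sheets $V^{(p')}$ (Lemma~\ref{lem:finiteness}).
\end{itemize}
Your smallness condition on $\varepsilon$ must absorb $c$; the paper takes $\varepsilon<\Lambda_\gamma(g)/(\gamma\,c\,m(m-1))$. Without this construction, letting $\varepsilon\to0$ says nothing about $M_1\# M_2$.

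\textbf{The analytic half is where the paper does no work.} It simply combines three cited results with $\Acw\le\Acwp$:
\begin{itemize}
\item the bound $\Acw(\widehat{\mathcal M}\,|\,\supp(d\widehat f))\ge 2/(c\varepsilon)$ from Theorem~\ref{thm:shi-cowaist}\,(i);
\item the monotonicity $\Lambda_\gamma(\widehat g)\ge\Lambda_\gamma(g)$ of \cite{BMP18};
\item the spectral-cowaist inequality of \cite{Liu26+}.
\end{itemize}

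Your Step~1 sketches a re-derivation of this, and it is sound in outline. No eigenfunction or conformal change is needed. Write $\nabla_X\psi=P_X\psi-\frac1m X\cdot D\psi$ with $P$ the twistor operator, use $|\nabla|\psi||\le\sqrt{(m-1)/m}\,|P\psi|+\frac1m|D\psi|$, and test $\Lambda_\gamma$ with $|\psi|$. This gives $\int|D_E\psi|^2\ge\bigl(c(\gamma,m)\Lambda_\gamma-C\varepsilon\bigr)\int|\psi|^2$ for every $\gamma>\frac{m-1}{4m}$.

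For the covering step, your second justification (the positive supersolution lifts) is the correct one; your first is not.

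Two further repairs are needed:
\begin{itemize}
\item You state the logic backwards. The Bochner estimate \emph{excludes} kernel, while the index \emph{forces} it.
\item A twisted-Dirac relative index only detects $\deg f\neq0$ when $m$ is even. For odd $m$ you need a stabilization, for instance a product with a long circle, which leaves $\Lambda_\gamma$ unchanged. Alternatively, use Shi's cowaist framework, which covers both parities.
\end{itemize}
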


For $M_1=T^m$, Theorem~\ref{thm:main_spectral} is a spectral form of the generalized Geroch conjecture: for any spin $m$-manifold $M$ and any $\gamma>\frac{m-1}{4m}$, the connected sum $T^m\# M$ admits no complete metric with positive $\gamma$-spectral constant. In view of the following remark, this refines the spin case of the generalized Geroch conjecture for metrics of uniformly positive scalar curvature.

\begin{remark}
If a complete metric $g$ on $\mathcal{M}:=M_1\#M_2$ satisfies $\scal_g\ge\delta_0>0$, then for every $\gamma>0$ and every $u\in H_0^1(\mathcal{M})$ with $\int_{\mathcal M} u^2\,dV=1$ we have
     \[
     \int_{\mathcal M}\bigl(|\nabla u|^2+\gamma\,\scal_g\,u^2\bigr)\,dV\ge\gamma\delta_0>0,
     \]
so $\Lambda_\gamma(g)>0$. Theorem~\ref{thm:main_spectral} therefore formally implies Theorem~\ref{thm:main}. We nevertheless give a separate proof of Theorem~\ref{thm:main} in Section~\ref{sec:proof-main}, which uses only the scalar-cowaist inequality and avoids both the spectral-cowaist machinery and the reliance on \cite{Liu26+}.
\end{remark}

\begin{corollary}\label{cor:product}
Let $W$ be a closed area enlargeable $(m-1)$-manifold, e.g.\ $W=T^{m-1}$, and let $M$ be an arbitrary spin $m$-manifold. Then the connected sum $(W\times\mathbf{R})\# M$ admits no complete Riemannian metric of uniformly positive scalar curvature, and no complete Riemannian metric with positive $\gamma$-spectral constant for $\gamma>\frac{m-1}{4m}$.
\end{corollary}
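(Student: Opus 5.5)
Corollary~\ref{cor:product} follows from Theorems~\ref{thm:main} and~\ref{thm:main_spectral} applied with $M_1=W\times\mathbf{R}$ and $M_2=M$. So the one thing to prove is that $W\times\mathbf{R}$ is an $m$-dimensional, possibly noncompact, area enlargeable manifold in the sense of Definition~\ref{def:enlargeable}. Once that is known, both conclusions are direct specializations: the first is Theorem~\ref{thm:main}, and the second is Theorem~\ref{thm:main_spectral} in the range $\gamma>\frac{m-1}{4m}$. The dimensions match, since $\dim(W\times\mathbf{R})=m$, and no condition on $M$ beyond spin is used.

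For the area enlargeability of $W\times\mathbf{R}$ I would invoke \cite[Proposition~6.7]{GL83}, already quoted in the introduction: the product of a closed enlargeable manifold with $\mathbf{R}$ is area enlargeable. Two points need to be checked.

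First, the proposition must apply when $W$ is only assumed \emph{area} enlargeable. Gromov--Lawson's proof of Proposition~6.7 uses only the existence of $\epsilon$-contracting (here $\epsilon$-area-contracting) maps of nonzero degree from spin coverings. So it should transfer verbatim to the area version.

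Second, the notion produced there must agree with Definition~\ref{def:enlargeable}. The introduction states that this definition agrees with Gromov--Lawson's up to refinements by Cecchini--Schick, Shi and Su. If some refinement is not obviously covered, I would verify the definition directly, as follows.

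Given $\epsilon>0$, take a spin covering $\widetilde W\to W$ and an $\epsilon$-area-contracting map $f\colon\widetilde W\to S^{m-1}$ that is constant outside a compact set and has nonzero degree. Form the covering $\widetilde W\times\mathbf{R}\to W\times\mathbf{R}$. On it, take $f\times h$, where $h\colon\mathbf{R}\to[0,1]$ is a cutoff with $|h'|\le\epsilon$ and support of length about $1/\epsilon$. Compose with the suspension map $S^{m-1}\times[0,1]\to S^m$, which collapses the ends. Choosing a suitable product metric, the resulting map to $S^m$ is $C\epsilon$-area-contracting. It is also locally constant at infinity and has degree $\pm\deg f\neq0$.

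The main obstacle I anticipate is this bookkeeping. The area contraction of the suspension map has to be controlled by the product of the contraction factors, and the compactly supported conditions required by Definition~\ref{def:enlargeable} have to be met. The example $W=T^{m-1}$ needs no extra argument, since tori are area enlargeable by \cite{GL80,GL83}.
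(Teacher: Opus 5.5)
Your main route is the paper's: \cite[Proposition~6.7]{GL83} gives area enlargeability of $W\times\mathbf{R}$ in the Gromov--Lawson sense. A map that is constant outside a compact set is in particular locally constant at infinity, so Definition~\ref{def:enlargeable} holds, Theorems~\ref{thm:main} and~\ref{thm:main_spectral} apply directly, and your fallback is never needed.

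It is worth knowing that the fallback would fail as sketched, for three reasons.
\begin{enumerate}
\item Over a noncompact cover $\widetilde W$, the unreduced suspension of $f\times h$ is not locally constant at infinity. It varies in $t$ on the noncompact set where $f$ takes its value at infinity and $0<h<1$, so you would need the reduced suspension $S^{m-1}\wedge S^1$.
\item The mixed terms $|df|\,|h'|$ are not controlled by area contraction of $f$, so $h$ must be chosen depending on $\sup|df|$ rather than with $|h'|\le\epsilon$.
\item Handling one product metric does not give the condition for every metric on the noncompact manifold $W\times\mathbf{R}$, which is what Definition~\ref{def:enlargeable} requires.
\end{enumerate}
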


\begin{proof}
By \cite[Proposition~6.7]{GL83}, $W\times\mathbf{R}$ is area enlargeable in the sense of Gromov and Lawson. Since their condition is stronger than Definition~\ref{def:enlargeable} (constant at infinity implies locally constant at infinity), the result follows from Theorems~\ref{thm:main} and \ref{thm:main_spectral}.
\end{proof}

Throughout the paper, all manifolds are smooth, connected, oriented, without boundary and of dimension at least two, unless otherwise stated.

The paper is organized as follows. Section~\ref{sec:prelim} collects the necessary definitions and preliminary results. Section~\ref{sec:proof-main} contains the proof of Theorem~\ref{thm:main}, Section~\ref{sec:alt} a second proof via the noncompact Llarull theorem, and Section~\ref{sec:proof-spectral} the proof of Theorem~\ref{thm:main_spectral}.

\section{Preliminaries}\label{sec:prelim}

We recall the notions and results used in the proofs. 

\begin{definition}[cf.\ \cite{GL83,CS21,Shi25,Su26}]\label{def:enlargeable}
Let $M$ be an $m$-dimensional smooth manifold. A Riemannian metric $g$ on $M$ is called \emph{area enlargeable} if for every $\varepsilon>0$ there exist a Riemannian covering $\pi:(\widehat{M},\widehat{g})\to(M,g)$ with $\widehat{g}=\pi^{*}g$ and $\widehat{M}$ spin, and a smooth map $f:\widehat{M}\to \mathbf{S}^m$ such that:
\begin{enumerate}
\item[(i)] $f$ has nonzero degree;
\item[(ii)] $f$ is area $\varepsilon$-decreasing, i.e., $|f^{*}\alpha|_{\widehat{g}}\le \varepsilon|\alpha|$ for all $\alpha\in\Omega^{2}(\mathbf{S}^m)$;
\item[(iii)] $f$ is locally constant at infinity\footnote{In the original definition \cite{GL83} the map is required to be constant outside a compact set. The weaker local constancy suffices for everything that follows; see Remark~\ref{rem:degree}.}, i.e., there is a compact set $K\subset\widehat{M}$ such that $f$ is locally constant on $\widehat{M}\setminus K$.
\end{enumerate}
A smooth manifold $M$ is called \emph{area enlargeable} if every Riemannian metric on $M$ is area enlargeable.
\end{definition}

\begin{remark}[Degree of maps locally constant at infinity]\label{rem:degree}
Let $N$ be an oriented $m$-manifold without boundary, possibly noncompact, and let $\phi:N\to \mathbf{S}^m$ be smooth and locally constant outside a compact set $K\subset N$. Fix a volume form $\omega$ on $\mathbf{S}^m$ with $\int_{\mathbf{S}^m}\omega=1$. Since $d\phi=0$ on $N\setminus K$, the form $\phi^{*}\omega$ is smooth, closed and compactly supported, and
\[
\deg(\phi):=\int_{N}\phi^{*}\omega
\]
is well defined; it depends only on the class of $\phi^{*}\omega$ in the compactly supported cohomology $H_c^m(N)$, and any two volume forms of integral $1$ on $\mathbf{S}^m$ differ by an exact form, so $\deg(\phi)$ is independent of the choice of $\omega$ by Stokes' theorem. At a regular value $y$ of $\phi$, the preimage $\phi^{-1}(y)$ is contained in $\supp(d\phi)$, hence finite, and the same local computation as in the closed case (cf.\ \cite[\S 5]{Mil65}, where the Brouwer degree is introduced) shows that $\deg(\phi)$ equals the signed count of its points; in particular $\deg(\phi)$ is an integer. The degree is unchanged under any smooth homotopy $(\phi_t)_{t\in[0,1]}$ for which all $\phi_t$ are locally constant outside a single, $t$-independent compact set: the homotopy formula then writes $\phi_1^{*}\omega-\phi_0^{*}\omega$ as the exterior derivative of a compactly supported form, whose integral vanishes.
\end{remark}

\begin{remark}\label{rem:locconst}
A locally constant function on a connected set is constant. In particular, if $f$ is locally constant on $\widehat{M}\setminus K$ and $C\subset\widehat{M}\setminus K$ is connected, then $f$ is constant on $C$.
\end{remark}

We record two elementary lemmas.

\begin{lemma}\label{lem:finiteness}
Let $\pi:\widehat N\to N$ be a covering map, with $N$ Hausdorff and locally
path connected \emph{(}for instance, a manifold\emph{)}. Let $U\subset N$ be
open, connected, simply connected and locally path connected, for instance a
small open ball in a coordinate chart. Then $U$ is evenly covered:
$\pi^{-1}(U)=\bigsqcup_{\alpha} U_\alpha$ is a disjoint union of open sets,
each mapped homeomorphically onto $U$ by $\pi$. If moreover $\pi$ is a
Riemannian covering, then each restriction $\pi|_{U_\alpha}:U_\alpha\to U$ is
an isometry for the restricted metrics.

Moreover, if $V\Subset U$, that is, $V$ is open with $\overline V$ compact and
$\overline V\subset U$, and $K\subset\widehat N$ is compact, then $K$ meets
only finitely many of the sets $V_\alpha:=U_\alpha\cap\pi^{-1}(V)$.
\end{lemma}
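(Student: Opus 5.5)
The lemma has three parts: even covering of $U$, the isometry statement, and the finiteness statement. I will prove them in that order.

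\emph{Even covering.} Since $U$ is connected, locally path connected and simply connected, I will use the lifting criterion. Fix $u_0\in U$ and a point $\hat u\in\pi^{-1}(u_0)$. The inclusion $\iota:U\hookrightarrow N$ satisfies the lifting criterion trivially, because $\pi_1(U)=0$. So there is a unique lift $s_{\hat u}:U\to\widehat N$ with $s_{\hat u}(u_0)=\hat u$ and $\pi\circ s_{\hat u}=\iota$. Set $U_{\hat u}=s_{\hat u}(U)$.

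\begin{itemize}
\item \textbf{Disjointness.} Suppose two lifts agree at one point. The set where they agree is open because $\pi$ is a local homeomorphism. It is closed because $\widehat N$ is Hausdorff, which holds since $N$ is Hausdorff and $\pi$ is a covering. As $U$ is connected, the two lifts coincide. Hence distinct $\hat u$ give disjoint images.
\item \textbf{Union.} Every point of $\pi^{-1}(U)$ lies in some $U_{\hat u}$. Given $\hat x$ over $x\in U$, join $x$ to $u_0$ by a path in $U$, lift it starting at $\hat x$, and let $\hat u$ be its endpoint. Uniqueness of path lifting then gives $s_{\hat u}(x)=\hat x$.
\item \textbf{Openness.} Each $U_{\hat u}$ is open, because a local section of a local homeomorphism has open image.
\item \textbf{Homeomorphism.} $\pi|_{U_{\hat u}}$ has continuous inverse $s_{\hat u}$, so it is a homeomorphism onto $U$.
\end{itemize}

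\emph{Isometry.} For a Riemannian covering, $\widehat g=\pi^*g$. Therefore the diffeomorphism $\pi|_{U_\alpha}$ pulls back $g|_U$ to $\widehat g|_{U_\alpha}$ by definition, so it is an isometry.

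\emph{Finiteness.} Let $V\Subset U$. For each $\alpha$, set $W_\alpha:=\pi|_{U_\alpha}^{-1}(\overline V)$; it is compact, lies inside $U_\alpha$, and contains $V_\alpha$. Suppose $K$ met infinitely many $V_\alpha$. Pick distinct indices $\alpha_n$ and points $p_n\in K\cap V_{\alpha_n}$. By compactness of $K$, a subsequence converges to some $p\in K$. Then $\pi(p)=\lim\pi(p_n)\in\overline V\subset U$, so $p\in U_\beta$ for some $\beta$. Since $U_\beta$ is an open neighbourhood of $p$, it contains $p_n$ for all large $n$. This forces $\alpha_n=\beta$ for all large $n$, because the $U_\alpha$ are disjoint, contradicting distinctness of the $\alpha_n$.

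This argument uses only that $\pi(p)$ lies in $U$, which is guaranteed by $\overline V\subset U$. That is precisely where the hypothesis $V\Subset U$ is needed: without it, the limit point could lie over $\partial U$ and fall in no $U_\beta$.

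The main obstacle is minor. It is the even-covering step, specifically checking that the lifts are well defined and exhaust the fibre. The only tools needed there are the lifting criterion and the Hausdorff uniqueness of lifts.
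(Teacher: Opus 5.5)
Your proof is correct. The even-covering and isometry parts follow the paper's argument: lifts of the inclusion are indexed by the fibre over a base point, openness holds because a section agrees locally with a local inverse of $\pi$, disjointness comes from uniqueness of lifts on the connected set $U$, and exhaustion from path lifting.

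The finiteness step is where you differ.
\begin{itemize}
\item You argue by contradiction. You extract a convergent subsequence of points $p_n\in K\cap V_{\alpha_n}$ and use that the open sheet $U_\beta$ through the limit absorbs a tail.
\item The paper instead partitions $L=\pi^{-1}(\overline V)$ into the relatively clopen pieces $L\cap U_\alpha$. It then extracts a finite subcover of the compact set $K\cap L$.
\end{itemize}
Both rest on the same fact: the disjoint open sheets cover $\pi^{-1}(\overline V)$. Your remark that only $\overline V\subset U$ matters is accurate.

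What the open-cover version buys is generality. Passing to a convergent subsequence presupposes that $K$ is sequentially compact. That holds in manifolds, the only case the paper uses later. It does not hold for compact subsets of coverings of arbitrary Hausdorff, locally path connected spaces, which is the generality in which the lemma is stated.

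To reach that generality, replace the subsequence by a point $p\in K$ every neighbourhood of which contains infinitely many of the (pairwise distinct) $p_n$; such a point exists in any compact space. Then $\pi(p)\in\overline V\subset U$, so $p$ lies in some sheet $U_\beta$. That open sheet would then contain two of the $p_n$, which contradicts the disjointness of the sheets.

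Your sets $W_\alpha$ are never used and can be dropped.
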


\begin{proof}
We may assume that $\widehat N$ is connected.
Since $U$ is connected and locally path connected, it is path connected; being
simply connected, the inclusion $\iota:U\hookrightarrow N$ induces the trivial
homomorphism on fundamental groups. Fix $x_0\in U$. By the lifting criterion
(Hatcher \cite[Proposition~1.33]{Hat02}), for every $\hat x\in\pi^{-1}(x_0)$ there
is a continuous lift $\sigma_{\hat x}:U\to\widehat N$ of $\iota$ with
$\sigma_{\hat x}(x_0)=\hat x$. Since $\pi$ is a local homeomorphism and
$\pi\circ\sigma_{\hat x}=\mathrm{id}_U$, the map $\sigma_{\hat x}$ agrees
locally with a local inverse of $\pi$; hence $U_{\hat x}:=\sigma_{\hat x}(U)$
is open and $\pi|_{U_{\hat x}}:U_{\hat x}\to U$ is a homeomorphism with
inverse $\sigma_{\hat x}$. The sets $U_{\hat x}$ are pairwise disjoint: if
$U_{\hat x}\cap U_{\hat y}\neq\varnothing$, then the lifts $\sigma_{\hat x}$
and $\sigma_{\hat y}$ agree at some point of the connected set $U$, hence on
all of $U$ by unique lifting \cite[Proposition~1.34]{Hat02}, and evaluating at $x_0$
gives $\hat x=\hat y$. They cover $\pi^{-1}(U)$: given $y\in\pi^{-1}(U)$,
choose a path $\gamma$ in $U$ from $\pi(y)$ to $x_0$ and lift it from $y$
\cite[Proposition~1.30]{Hat02}; the endpoint $\hat x$ of the lift lies over $x_0$,
and uniqueness of path lifts applied to the reversed path gives
$y=\sigma_{\hat x}(\pi(y))\in U_{\hat x}$. Thus $U$ is evenly covered. If
$\pi$ is a Riemannian covering it is a local isometry, so each homeomorphism
$\pi|_{U_{\hat x}}$ is an isometry.

For the finiteness, note first that $\widehat N$ is Hausdorff: two points of
$\widehat N$ lying over distinct points of the Hausdorff space $N$ are
separated by the preimages of disjoint open sets in $N$, and two points lying
over the same point of $N$ are separated by disjoint sheets of an evenly
covered neighborhood. Since $\overline V$ is compact and $N$ is Hausdorff,
$\overline V$ is closed in $N$, so $L:=\pi^{-1}(\overline V)$ is closed in
$\widehat N$. As $\overline V\subset U$, we may set
\[
\overline{V_\alpha}
:=(\pi|_{U_\alpha})^{-1}(\overline V)=L\cap U_\alpha .
\]
Each $\overline{V_\alpha}$ is compact, being homeomorphic to $\overline V$,
hence closed in the Hausdorff space $\widehat N$, and it is relatively open in
$L$ since $U_\alpha$ is open. Thus $L=\bigsqcup_\alpha\overline{V_\alpha}$ is
a partition of $L$ into relatively clopen sets. Since $L$ is closed and $K$ is
compact, $K\cap L$ is compact; the pairwise disjoint relatively open sets
$\overline{V_\alpha}\cap K$ cover it, a finite subcover exists, and by
disjointness any such subcover must contain every member that meets $K\cap L$,
so only finitely many $\overline{V_\alpha}$ meet $K$. As
$V_\alpha\subset\overline{V_\alpha}$, the same holds for the sets $V_\alpha$.
\end{proof}

\begin{lemma}\label{lem:complete-closed}
Let $(N,g)$ be a complete Riemannian manifold and let $D\subset N$ be a
closed, connected, smoothly embedded submanifold with boundary \emph{(}in the
application below, the complement of a small open ball in a coordinate chart,
with boundary a sphere\emph{)}. Equip $D$ with the length metric $d_D$ induced
by $g|_D$. Then $(D,d_D)$ is a complete metric space.
\end{lemma}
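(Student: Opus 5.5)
The plan is to prove completeness of $(D,d_D)$ by comparing $d_D$ with the ambient distance $d_g$, and then using that closed bounded subsets of $N$ are compact (Hopf--Rinow). Since every curve in $D$ is a curve in $N$ with the same length, $d_g(x,y)\le d_D(x,y)$ for $x,y\in D$. Hence a $d_D$-Cauchy sequence $(x_k)$ in $D$ is $d_g$-Cauchy, and it converges in $N$ to some $x$ by completeness of $(N,g)$. Because $D$ is closed, $x\in D$. The real task is to show $d_D(x_k,x)\to 0$, i.e.\ that the two metrics induce the same convergence near points of $D$.

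For this I will prove a local comparison. Around each $p\in D$ there are a neighborhood $W_p$ and a constant $C_p\ge1$ with $d_D(y,z)\le C_p\,d_g(y,z)$ for $y,z\in W_p\cap D$. At interior points of $D$ this is immediate: take $W_p$ a small geodesic ball of $N$ contained in $D$, so that the minimizing geodesic between nearby points stays in $D$ and $C_p=1$.

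At boundary points I will use a boundary chart $\varphi$ that maps a neighborhood of $p$ in $D$ onto a relatively open subset of a closed half-space. I take a convex half-ball there; in the application the boundary is a round sphere in a coordinate chart, so a half-ball is available after a diffeomorphism. On a compact piece of the chart, the pulled-back metric $g$ is uniformly equivalent to the Euclidean metric with some constant $\Lambda$. The straight segment between two points in the half-ball lies in $D$, and its $g$-length is at most $\Lambda$ times their Euclidean distance. That Euclidean distance is in turn at most $\Lambda$ times their $d_g$-distance, provided the points are close enough to $p$ that $g$-almost-minimizing curves between them stay inside the chart. This yields $C_p=\Lambda^2$.

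Given the comparison, $x_k\to x$ in $d_g$ forces $x_k\in W_x$ for large $k$, and then $d_D(x_k,x)\le C_x d_g(x_k,x)\to0$. Connectedness of $D$ guarantees $d_D$ is finite, so it is a genuine metric.

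The main obstacle is the boundary comparison. The delicate point is ensuring that curves which nearly realize $d_g(y,z)$ for $y,z$ near $p$ remain in the chart domain, so that the Euclidean comparison applies. I will handle this by shrinking $W_p$ to a $d_g$-ball of radius $r$ whose $3r$-ball lies inside the chart. Any curve of length less than $2r$ starting in $W_p$ then stays in the $3r$-ball.
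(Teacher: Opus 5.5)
Your argument is correct when $\dim D=\dim N$, which is the case used in the paper, but it takes a longer route and proves more than the paper does.

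The paper never proves an estimate of the form $d_D\le C_p\,d_g$. Once $x_k\to x\in D$ in $(N,d_g)$, it argues as follows:
\begin{enumerate}
\item It takes a chart $\varphi$ of $D$ itself around $x$ whose image is convex (a ball or a half-ball).
\item Because $D$ carries the subspace topology, $x_k$ eventually lies in the chart domain.
\item The straight segment from $\varphi(x_k)$ to $\varphi(x)$ stays in $D$, so $d_D(x_k,x)\le C\,|\varphi(x_k)-\varphi(x)|$.
\item Continuity of $\varphi$ then gives $d_D(x_k,x)\to 0$.
\end{enumerate}
Only the upper bound $|v|_g\le C|v|$ is used. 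So the delicate point you single out, keeping $d_g$-almost-minimizing curves inside the chart, never arises, and no split into interior and boundary points is needed. For the same reason the paper's proof works in any codimension.

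By contrast, both of your steps assume $D$ is a codimension-zero domain: the interior step needs a geodesic ball of $N$ inside $D$, and the boundary step needs $N$-curves to stay in a chart. In exchange, your route gives a stronger, quantitative statement: $d_D$ and $d_g|_D$ are locally bi-Lipschitz on $D$.

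If you keep your approach, make one point explicit. In the boundary step, $\psi$ must be a chart of $N$ around $p$ in which $D$ corresponds to a half-space, not just a chart of $D$. Near $\partial D$, both the $d_g$-almost-minimizing curves and the ball $B_g(p,3r)$ leave $D$. For example, if $D$ is the complement of a round ball, the segment between two nearby boundary points passes through the ball. Such adapted charts exist for codimension-zero embedded submanifolds with boundary. With one in hand, choose $r$ so that $B_g(p,3r)$ lies in the preimage of a Euclidean ball whose intersection with the half-space is your convex half-ball. Then your bound $d_D(y,z)\le\Lambda^2\,d_g(y,z)$ goes through.
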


\begin{proof}
Since $D$ is a connected smooth manifold with boundary, any two of its points
can be joined by a piecewise smooth curve in $D$; hence $d_D$ is finite
valued, and it is a metric on $D$. Let $d_N$ denote the Riemannian distance of
$(N,g)$; by the Hopf--Rinow theorem, $(N,d_N)$ is a complete metric space.
Every piecewise smooth curve in $D$ is also a curve in $N$, so
$d_N(p,q)\le d_D(p,q)$ for $p,q\in D$. Let $(x_k)$ be a $d_D$-Cauchy sequence
in $D$. It is $d_N$-Cauchy by the inequality above, hence $x_k\to x$ in
$(N,d_N)$ for some $x\in N$, and $x\in D$ since $D$ is closed. It remains to
show that $d_D(x_k,x)\to0$.

Since $D$ is a smoothly embedded submanifold with boundary, there is a chart
$\varphi$ of $D$ around $x$ whose image is a relatively open subset of a
closed half-space $\mathbb R^m_+$; shrinking the domain, we obtain a
neighborhood $W$ of $x$ in $D$ such that $\overline W$ is compact and
$\varphi(W)$ is convex---a small open ball if $x\notin\partial D$, or the
intersection of such a ball with $\mathbb R^m_+$ if $x\in\partial D$. On the
compact set $\overline W$ the metric $g|_D$ is uniformly equivalent to the
pullback of the Euclidean metric of the chart: $c\,|v|\le|v|_g\le C\,|v|$ for
some $c,C>0$. Since $D$ carries the subspace topology, $W$ contains $O\cap D$
for some open $O\subset N$ with $x\in O$, so $x_k\in W$ for all sufficiently
large $k$. For such $k$ the straight segment from $\varphi(x_k)$ to
$\varphi(x)$ lies in $\varphi(W)$, hence parametrizes a piecewise smooth curve
in $D$, and therefore
\[
d_D(x_k,x)\;\le\;C\,\bigl|\varphi(x_k)-\varphi(x)\bigr|
\;\xrightarrow[k\to\infty]{}\;0 ,
\]
since $\varphi(x_k)\to\varphi(x)$. Thus $d_D(x_k,x)\to0$, and $(D,d_D)$ is
complete.
\end{proof}

We denote by $\Acw(N,g)$ the $\widehat{A}$-cowaist of a complete spin Riemannian manifold $(N,g)$ introduced by Shi \cite{Shi25}, and by $\Acw(N\,|\,K,g)$ its variant subject to a closed subset $K\subset N$ \cite[Definition~3.15 and Remark~3.16]{Shi25}. We denote by $\Acwp(N,g)$ the relative $\widehat{A}$-cowaist of \cite[Definition~1.3]{Liu26+}, defined through admissible Gromov--Lawson pairs. By \cite[Remark~1.4]{Liu26+},
\begin{equation}\label{eq:cowaist-comparison}
\Acw(N,g)\le\Acwp(N,g)
\end{equation}
for every complete spin manifold $N$; hence every upper bound for $\Acwp$ remains valid for $\Acw$.

\begin{theorem}[\cite{Shi25}]\label{thm:shi-cowaist}
Let $(N,g)$ be a complete spin Riemannian $m$-manifold.
\begin{enumerate}
\item[(i)] If $f:N\to\mathbf S^m$ is a smooth area $\varepsilon$-decreasing map of nonzero degree which is locally constant at infinity, then
\[
\Acw(N,g)\ge\Acw(N\,|\,\supp(df),g)\ge\frac{2}{\varepsilon}.
\]
\item[(ii)] If $\scal_g\ge\sigma^2 m(m-1)$ for some constant $\sigma>0$, then
\[
\Acw(N,g)\le\frac{2}{\sigma^2}.
\]
\end{enumerate}
\end{theorem}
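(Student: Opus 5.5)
Theorem~\ref{thm:shi-cowaist} is quoted from \cite{Shi25}; I would nevertheless check both parts directly from the definition of the $\widehat A$-cowaist. Recall the setup I will use. $\Acw(N\,|\,K,g)$ is the supremum of $1/\|R^{E}-R^{F}\|$ over pairs of Hermitian bundles $(E,F)$ with metric connections which are identified, together with their connections, outside a compact set. Here the curvature norm is measured on the support of the difference, which lies in $K$. The pair is required to have nonzero relative index $\operatorname{ind}(D_{E},D_{F})$ in the Gromov--Lawson sense. The first inequality in (i) is then formal: restricting to a smaller closed support only shrinks the class of admissible pairs. So the work consists of a lower bound construction for (i) and a vanishing argument for (ii).

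For (i), I would pull back bundles from the sphere. Choose a Hermitian bundle $E_0\to\mathbf S^m$ with $\int_{\mathbf S^m}\mathrm{ch}(E_0)\neq0$, for instance a half-spinor bundle of $\mathbf S^m$ ($m$ even), or use the suspension trick for odd $m$. Give it the connection induced from the Levi-Civita connection. Its curvature is Clifford multiplication by the curvature of the round sphere, so the curvature endomorphism on $2$-forms has norm $\tfrac12$. Set $E=f^*E_0$ and $F=\underline{\mathbf C}^{\operatorname{rk}E_0}$. Since $f$ is locally constant on each component of $N\setminus\supp(df)$, the pulled-back connection is flat and canonically trivial there. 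This gives the required identification of $(E,F)$ outside the compact set $\supp(df)$. The area $\varepsilon$-decreasing condition yields $\|R^{E}\|\le\varepsilon\cdot\tfrac12$, hence the bound $2/\varepsilon$. For the index, the Gromov--Lawson relative index theorem gives $\operatorname{ind}(D_E,D_F)=\int_N\widehat A(N)\,(\mathrm{ch}(E)-\mathrm{ch}(F))$. Only the top-degree part of $\mathrm{ch}(E)$ survives in this integral, because $f^*$ of the positive-degree forms factors through $H^*(\mathbf S^m)$. By Remark~\ref{rem:degree} the integral therefore equals $\deg(f)\int_{\mathbf S^m}\mathrm{ch}_{m/2}(E_0)\neq0$.

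For (ii), I would argue by contradiction using a twisted Lichnerowicz formula. Take an admissible pair with $\|R^E-R^F\|<\sigma^2/2$; after the usual normalization we may take $F$ flat, so in fact $\|R^E\|<\sigma^2/2$. The Weitzenböck formula reads $D_E^2=\nabla^*\nabla+\tfrac{\scal_g}4+\mathcal R^E$. The Clifford contraction $\mathcal R^E=\sum_{i<j}e_ie_j\otimes R^E(e_i,e_j)$ has operator norm at most $\tfrac{m(m-1)}2\|R^E\|<\tfrac{m(m-1)\sigma^2}{4}\le\tfrac{\scal_g}4$. The same bound holds for $F$. Hence $D_E^2$ and $D_F^2$ are uniformly positive on all of $N$. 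Both operators are then invertible, and their relative index vanishes, contradicting admissibility. It follows that $\Acw(N,g)\le 2/\sigma^2$.

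The main obstacle is the analytic justification in (ii). On a noncompact $N$, the relative index is defined via operators that are invertible at infinity, and one must check that \emph{uniform} positivity of the curvature term gives genuine $L^2$-invertibility of both operators. The precise constant in the curvature estimate for the Clifford contraction must also match the chosen normalization of $\|R^E\|$. I expect the uniform lower bound $\scal_g\ge\sigma^2m(m-1)$ to be exactly what makes this work, in contrast with merely pointwise positive scalar curvature.
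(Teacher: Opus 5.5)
Your proposal, for even $m$, reconstructs the standard argument behind the paper's citations to \cite{Shi25} and is right in outline. It does not prove the statement for odd $m$, which the statement covers and the paper needs (e.g.\ $m=3$).

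\textbf{The gap: odd $m$.} Both halves of your argument rest on a $\mathbf Z_2$-graded relative index. In (i) you need $E_0\to\mathbf S^m$ with $\int_{\mathbf S^m}\mathrm{ch}(E_0)\neq0$; in (ii) admissibility is detected by $\operatorname{ind}D_E^+-\operatorname{ind}D_F^+$. For odd $m$ both are void:
\begin{itemize}
\item $\widehat A(N)\wedge(\mathrm{ch}E-\mathrm{ch}F)$ has only even-degree components, so its integral over $N$ vanishes for every pair;
\item no bundle on $\mathbf S^m$ has a degree-$m$ Chern character form.
\end{itemize}
The odd cowaist has to be set up with odd $K$-theory, for instance by pulling back a unitary $u\colon\mathbf S^m\to U(l)$ representing a generator of $K^{-1}(\mathbf S^m)$. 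A spectral-flow pairing then replaces the index, which is the setting of \cite{Shi25}. The vanishing step in (ii) becomes: every operator in the family is uniformly invertible, so the spectral flow is zero.

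Your parenthetical ``suspension trick'' does not supply this, for two reasons.
\begin{itemize}
\item The obvious suspension $(p,t)\mapsto(\sin\theta(t)f(p),\cos\theta(t))$, from $N\times\mathbf R$ to $\mathbf S^{m+1}$, is not locally constant at infinity. On an unbounded component of $N\setminus\supp(df)$ it still moves in the $\mathbf R$-direction.
\item If you run your argument for (ii) on the $(m+1)$-manifold $N\times\mathbf R$, the Clifford contraction has $\binom{m+1}{2}$ terms. The crude Lichnerowicz estimate then only gives $\frac{2(m+1)}{(m-1)\sigma^2}$, not $\frac{2}{\sigma^2}$. Such a non-sharp constant would still suffice for this paper's applications, since $\varepsilon$ is arbitrary, but it does not prove the statement as written.
\end{itemize}

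\textbf{Two smaller points in the even case.}

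First, $\|R^E-R^F\|$ is not meaningful. Where $E$ and $F$ are not identified, their curvatures act on different bundles. The relevant quantity is $\max\{\|R^E\|_\infty,\|R^F\|_\infty\}$, and no normalization makes $F$ flat for an arbitrary pair. Your (ii) survives because you bound $D_E^2$ and $D_F^2$ separately. You should add that the lower bound $\frac{m(m-1)}{4}\bigl(\sigma^2-2\max\|R\|\bigr)>0$ is uniform, since that is what gives $L^2$-invertibility on the complete manifold.

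Second, in (i) there is no curvature hypothesis, so $D_E$ and $D_F$ need not be Fredholm and the relative index theorem cannot be invoked there. Admissibility has to be read as nonvanishing of the topological index $\int_N\widehat A(N)\wedge(\mathrm{ch}E-\mathrm{ch}F)$, which you can compute directly. The components of $\mathrm{ch}(E_0)$ in degrees $0<2k<m$ are exact on $\mathbf S^m$, say $d\eta$. Then $f^*\eta$ has compact support because it vanishes where $df=0$, so Stokes kills those terms. The relative index theorem is needed only in (ii), to identify this integral with $\operatorname{ind}D_E^+-\operatorname{ind}D_F^+=0$.
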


\begin{proof}
The first inequality in (i) is the monotonicity of the $\widehat{A}$-cowaist in the constraining set \cite[Definition~3.15 and Remark~3.16]{Shi25}, and the second is the lower bound of \cite[Lemma~4.5 and Remark~4.6]{Shi25} applied to $f$. Part (ii) is \cite[Theorems~1.8 and 4.1]{Shi25}.
\end{proof}

\begin{theorem}[{Spectral-cowaist inequality \cite[Remark~1.5]{Liu26+}}]\label{thm:spectral-cowaist}
Let $(N,g)$ be a complete spin Riemannian $m$-manifold with $\Lambda_\gamma(g)>0$ for some $\gamma>\frac{m-1}{4m}$. Then
\[
\Acwp(N,g)\le\frac{2\gamma\,m(m-1)}{\Lambda_\gamma(g)}.
\]
\end{theorem}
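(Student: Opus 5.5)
The plan is to argue by contradiction in the standard Gromov--Lawson/Llarull manner, replacing the pointwise scalar curvature bound by the spectral bound via a Kato-type inequality. Suppose $\Acwp(N,g)>\frac{2\gamma m(m-1)}{\Lambda_\gamma(g)}$. By the definition of the relative $\widehat A$-cowaist through admissible Gromov--Lawson pairs \cite[Definition~1.3]{Liu26+}, there is then an admissible pair of Hermitian bundles $(E,F)$ with connections, isomorphic outside a compact set $K$, with nonzero relative index. Their curvatures satisfy $\|R^E\|,\|R^F\|\le 1/w$ for some $w$ with $\frac{2\gamma m(m-1)}{\Lambda_\gamma(g)}<w$. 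Consider the twisted Dirac operator $D_{E\oplus F}$ on $S\otimes(E\oplus F^{\mathrm{op}})$, graded as usual. By the relative index theorem of Gromov--Lawson on the complete manifold $N$, a nonzero relative index produces a nontrivial $L^2$ (in fact $H^1$) harmonic spinor $\psi$. The non-Fredholm situation is handled as in \cite{Liu26+}: one either uses that the spectral bound gives invertibility at infinity, or argues via a sequence of approximate kernel elements, and I would follow the latter if needed.

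The core estimate comes from the Lichnerowicz formula
\[
D^2=\nabla^*\nabla+\tfrac14\scal_g+\mathcal R,
\]
where the twisting term satisfies the pointwise bound $\mathcal R\ge -\frac{m(m-1)}{2w}$ after the normalization built into the cowaist (Llarull's estimate for $\tfrac12\sum e_ie_j\otimes R_{e_ie_j}$). Integrating against $\psi$ gives
\[
0\ge\int_N|\nabla\psi|^2+\tfrac14\scal_g|\psi|^2-\tfrac{m(m-1)}{2w}|\psi|^2.
\]
For a harmonic twisted spinor the refined Kato inequality gives $|\nabla\psi|^2\ge\frac{m}{m-1}|\nabla|\psi||^2$. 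Since $\gamma>\frac{m-1}{4m}$, we have $\frac1{4\gamma}<\frac m{m-1}$, so with $u=|\psi|$ we obtain
\[
\int|\nabla\psi|^2+\tfrac14\scal_g u^2\ \ge\ \tfrac1{4\gamma}\int\bigl(|\nabla u|^2+\gamma\scal_g u^2\bigr)\ \ge\ \tfrac{\Lambda_\gamma(g)}{4\gamma}\int u^2 .
\]
Here the first inequality splits the Kato term: it uses the part $\frac1{4\gamma}|\nabla u|^2$ and discards the excess, which is nonnegative. The second inequality is the definition of $\Lambda_\gamma$, applicable once $u\in H^1_0(N)$. Combining the two displays yields
\[
\Bigl(\tfrac{\Lambda_\gamma}{4\gamma}-\tfrac{m(m-1)}{2w}\Bigr)\int u^2\le0,
\]
which forces $w\le\frac{2\gamma m(m-1)}{\Lambda_\gamma}$, a contradiction.

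The main obstacle is analytic rather than algebraic. First, one must justify that $u=|\psi|\in H^1_0(N)$ and the Kato inequality across the zero set of $\psi$. I would use $u_\delta=\sqrt{|\psi|^2+\delta^2}-\delta$ and let $\delta\to0$, together with cutoff functions $\chi_R$ built from the complete metric to justify integrations by parts; the error terms $\int|\nabla\chi_R|^2|\psi|^2$ vanish because $\psi\in L^2$. Second, one must produce the harmonic spinor on a noncompact manifold when $\scal_g$ is not bounded below. Here the positivity of $\Lambda_\gamma$, applied through the same estimate outside $K$ (where $\mathcal R=0$ on the relative part), should give a spectral gap at infinity, hence Fredholmness and the relative index theorem. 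Making this gap argument precise is the step I expect to require the most care.
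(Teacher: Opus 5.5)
The paper contains no proof of this statement; it is imported from \cite[Remark~1.5]{Liu26+}. I can therefore only assess your argument on its own terms. Its algebraic core is correct: Lichnerowicz, the Llarull bound $\mathcal R\ge-\frac{m(m-1)}{2w}$, the refined Kato inequality and the test function $u=|\psi|$ in $\Lambda_\gamma$ give exactly $\frac{2\gamma m(m-1)}{\Lambda_\gamma(g)}$.

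The genuine gap is the step you flagged, and your proposed way around it fails. To extract an $L^2$ harmonic spinor from a nonzero relative index, you need $D_E$ and $D_F$ to be invertible at infinity. ``The same estimate outside $K$'' cannot give this. The refined Kato inequality $|\nabla\psi|^2\ge\frac m{m-1}|d|\psi||^2$ holds only when $D\psi=0$, whereas a gap estimate must hold for arbitrary compactly supported $\phi$. For those you only have $|\nabla\phi|\ge|d|\phi||$, and the comparison with $\Lambda_\gamma$ then needs $1\ge\frac1{4\gamma}$, i.e.\ $\gamma\ge\frac14$. For $\frac{m-1}{4m}<\gamma<\frac14$ the argument is circular, and your fallback via approximate kernel elements has the same defect.

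There are two smaller points. First, $\mathcal R^E$ need not vanish outside $K$, since only $E\cong F$ there; this is harmless because $\|R^E\|\le 1/w$ globally. Second, in your cutoff step the cross terms $\int\chi_R u\langle\nabla\chi_R,\nabla u\rangle$ are not controlled by $\psi\in L^2$ alone. A telltale sign of the gap is that your argument would run verbatim at $\gamma=\frac{m-1}{4m}$, so it never uses the strict inequality in the hypothesis.

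The missing ingredient is a refined Kato inequality with a Dirac error term, valid for every spinor. Write $\nabla_X\phi=P_X\phi-\frac1m X\cdot D\phi$ with the twistor operator $P$. Then $|\nabla\phi|^2=|P\phi|^2+\frac1m|D\phi|^2$ and $\sum_i e_i\cdot P_{e_i}\phi=0$. The usual refined-Kato argument applied to $P$ gives $|d|\phi||\le\sqrt{\frac{m-1}{m}}\,|P\phi|+\frac1m|D\phi|$, hence for every $\delta>0$
\[
|\nabla\phi|^2+\frac{1}{\delta m(m-1)}\,|D\phi|^2\ \ge\ \frac{m}{(m-1)(1+\delta)}\,\bigl|d|\phi|\bigr|^2 .
\]
The strict inequality $\gamma>\frac{m-1}{4m}$ is exactly what lets you choose $\delta>0$ with $\frac{m}{(m-1)(1+\delta)}\ge\frac1{4\gamma}$. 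Then for all compactly supported $\phi$
\[
\Bigl(1+\frac{1}{\delta m(m-1)}\Bigr)\|D_E\phi\|^2\ \ge\ \Bigl(\frac{\Lambda_\gamma(g)}{4\gamma}-\frac{m(m-1)}{2w}\Bigr)\|\phi\|^2 ,
\]
and likewise for $F$. So if $w>\frac{2\gamma m(m-1)}{\Lambda_\gamma(g)}$, the closures of $D_E$ and $D_F$ (essentially self-adjoint by completeness) are invertible on all of $N$. The analytic relative index is then $0-0=0$, contradicting the relative index theorem for a pair with nonzero topological index. No harmonic spinor, $H^1$-regularity or cutoff is needed.

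Finally, your graded construction only treats even $m$; the odd-dimensional case of the definition requires its own argument.
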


\begin{proposition}[{\cite[Theorem~1.1]{BMP18}}]\label{prop:spectrum-covering}
Let $\pi:(\widehat N,\widehat g)\to(N,g)$ be a Riemannian covering of complete Riemannian $m$-manifolds. Then the $\gamma$-spectral constant is monotone under the covering:
\[
\Lambda_\gamma(\widehat g)\ge\Lambda_\gamma(g)
\qquad\text{for every }\gamma\in\mathbf R.
\]
\end{proposition}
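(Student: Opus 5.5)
The approach I would take is the ground state (Allegretto--Piepenbrink) method. It characterizes $\Lambda_\gamma(g)$ as the largest $\lambda$ for which the Schrödinger operator $L_g=-\Delta_g+\gamma\,\scal_g$ admits a positive solution of $L_g\varphi=\lambda\varphi$. The crucial point is that this characterization involves only a local equation, and local equations lift to coverings. If $\Lambda_\gamma(g)=-\infty$ there is nothing to prove, so assume $\lambda_0:=\Lambda_\gamma(g)>-\infty$.

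\emph{Step 1: a positive $\lambda_0$-solution downstairs.} Fix an exhaustion $\Omega_1\Subset\Omega_2\Subset\cdots$ of $N$ by smoothly bounded relatively compact connected domains containing a base point $x_0$. Let $\lambda_k$ be the first Dirichlet eigenvalue of $L_g$ on $\Omega_k$. By domain monotonicity and density of $C_c^\infty(N)$ in $H_0^1(N)$, the sequence $\lambda_k$ decreases to $\lambda_0$. Let $\varphi_k>0$ be the corresponding first eigenfunction, normalized by $\varphi_k(x_0)=1$. Since $\scal_g$ is smooth, hence locally bounded, the Harnack inequality on compact subsets gives locally uniform two-sided bounds on $\varphi_k$. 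Interior Schauder estimates then give $C^{2,\alpha}_{\mathrm{loc}}$ bounds, and a diagonal subsequence converges to a function $\varphi\ge0$ with $\varphi(x_0)=1$ and $L_g\varphi=\lambda_0\varphi$ on $N$. The strong maximum principle yields $\varphi>0$. I expect this to be the main technical step, although it is standard. Completeness is not needed here, since everything is local.

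\emph{Step 2: lifting.} Set $\widehat\varphi:=\varphi\circ\pi$. Because $\pi$ is a local isometry and $\scal_{\widehat g}=\scal_g\circ\pi$, we get $\widehat\varphi>0$ and $L_{\widehat g}\widehat\varphi=\lambda_0\widehat\varphi$ on $\widehat N$.

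\emph{Step 3: ground state transform upstairs.} For $u\in C_c^\infty(\widehat N)$, write $u=\widehat\varphi\,w$ with $w=u/\widehat\varphi\in C_c^\infty(\widehat N)$. Expanding $|\nabla u|^2=w^2|\nabla\widehat\varphi|^2+2w\widehat\varphi\,\langle\nabla w,\nabla\widehat\varphi\rangle+\widehat\varphi^2|\nabla w|^2$ and integrating by parts the term $\int\widehat\varphi\langle\nabla(w^2),\nabla\widehat\varphi\rangle$ (no boundary terms, since $w$ has compact support) gives
\[
\int_{\widehat N}\bigl(|\nabla u|^2+\gamma\,\scal_{\widehat g}\,u^2\bigr)\,dV
=\int_{\widehat N} w^2\,\widehat\varphi\,L_{\widehat g}\widehat\varphi\,dV+\int_{\widehat N}\widehat\varphi^2|\nabla w|^2\,dV
\ge\lambda_0\int_{\widehat N}u^2\,dV .
\]

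\emph{Step 4: conclusion.} The inequality in Step 3 holds for all $u\in C_c^\infty(\widehat N)$. The quadratic form is continuous in the $H^1$-norm on functions supported in a fixed compact set. Moreover, the infimum defining $\Lambda_\gamma(\widehat g)$ may be computed over $C_c^\infty(\widehat N)$, directly from its definition via the completion, at least when $\scal$ is bounded below on supports. Taking the infimum therefore gives $\Lambda_\gamma(\widehat g)\ge\lambda_0=\Lambda_\gamma(g)$.
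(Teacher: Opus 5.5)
Your proof is correct, but it takes a different route from the paper, which gives no argument of its own. The paper imports the inequality from \cite{BMP18} and only adds that the Lipschitz-test-function definition used there agrees with the $H_0^1$ definition for complete metrics. You instead give the classical self-contained argument: build a positive solution of $L_g\varphi=\mu\varphi$ at $\mu=\lim_k\lambda_k$ by exhaustion, Harnack and Schauder estimates, lift it to $\widehat N$, and apply the ground-state identity there. The citation buys brevity. Your argument shows exactly which hypotheses matter: Steps 1--3 use neither completeness nor any global bound on $\scal$. So they prove outright that the infimum of the Rayleigh quotient over $C_c^\infty(\widehat N)$ is at least the infimum over $C_c^\infty(N)$. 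The only analytic input left is the comparison with the $H_0^1$ infimum, which is precisely the point the paper settles in one sentence. Two remarks on that last step. Downstairs you only need the trivial inequality $\lim_k\lambda_k\ge\Lambda_\gamma(g)$, which follows from $C_c^\infty(N)\subset H_0^1(N)$; density is not needed there. Upstairs, $H^1$-density alone does not give $\int\scal\,u_j^2\to\int\scal\,u^2$ when $\scal$ is unbounded. Also, ``bounded below on supports'' is not the right condition, since any smooth function is bounded on a compact support. What works is a global lower bound on $\gamma\,\scal_{\widehat g}$. Then every $u\in H_0^1(\widehat N)$ of finite energy satisfies $\int|\gamma\,\scal|\,u^2<\infty$. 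You can then approximate $u$ by $\chi_R u$ with distance cutoffs (this is where completeness of $\widehat g$ enters), pass to the limit by dominated convergence, and mollify on a fixed compact set where $\scal$ is bounded. Without such a bound the $H_0^1$ definition of $\Lambda_\gamma$ is itself ambiguous, and this caveat applies equally to the paper's remark identifying the two definitions.
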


In \cite{BMP18} the bottom of the spectrum is defined through Rayleigh quotients of compactly supported Lipschitz functions; for a complete metric this agrees with the variational definition of $\Lambda_\gamma(g)$ used here, and the proposition above is the special case of their monotonicity result for the Schr\"odinger operator $-\Delta+\gamma\scal_g$, whose potential on $\widehat N$ is the lift $\gamma\scal_g\circ\pi$.

\section{Proof of Theorem~\ref{thm:main}}\label{sec:proof-main}

\begin{proof}[Proof of Theorem~\ref{thm:main}]
If $M_1$ is closed, the conclusion already follows from Theorem~\ref{thm:WZ}; we therefore assume that $M_1$ is noncompact. Suppose that $\mathcal{M}:=M_1\#M_2$ carries a complete metric $g$ with $\scal_{g}\ge\delta_0$ for some $\delta_{0}>0$. We argue in five steps.

\subsection*{Step 1. Setup and the collapsing map}

Write
\[
\mathcal{M}=(M_1\setminus\mathring B_1)\cup_\varphi(M_2\setminus\mathring B_2),
\]
where $B_1\subset M_1$ and $B_2\subset M_2$ are smoothly embedded closed $m$-balls. Choose concentric open balls $U,V$ such that
     \[
     B_1 \Subset U \Subset V
     \]
and both are contained in a coordinate chart around $p_0\in \mathring B_1$.

We extend $g$ to a complete Riemannian metric $g_1$ on $M_1$. Since $M_1\setminus U\subset\mathcal M$, the metric $g$ restricts to $M_1\setminus U$; and since $\overline U$ is a compact manifold with boundary $\partial U$, there is a smooth metric on $\overline U$ coinciding with $g$ on a collar of $\partial U$. Define
     \[
     g_1 :=
     \begin{cases}
     g & \text{on } M_1\setminus U,\\
     \text{the chosen metric} & \text{on } \overline U.
     \end{cases}
     \]
Then $g_1$ is smooth on $M_1$. It is also complete: $M_1\setminus U$ is a closed subset of the complete manifold $\mathcal M$, a manifold with compact boundary $\partial U$, hence complete in the induced length metric by Lemma~\ref{lem:complete-closed}; and $\overline U$ is compact. The precise choice of the extension on $U$ is irrelevant in what follows. Since $M_1$ is area enlargeable, the metric $g_1$ is area enlargeable.

Let $h:M_1\to M_1$ be a smooth collapsing map satisfying
     \[
     h|_{U}\equiv p_0,\qquad
     h|_{M_1\setminus V}=\id,\qquad
     h(V)\subset V,
     \]
obtained by radial contraction inside $V$; it is homotopic to $\id_{M_1}$ through a smooth homotopy $H_t$ with $H_t=\id$ outside $V$ and $H_t(V)\subset V$ for all $t$. Set
     \[
     c:=\Lip_{g_1}(h)^2,
     \]
which is finite since $\overline V$ is compact. Note that $c$ depends on the balls and on $g_1|_V$, but not on any further choices made below.

\subsection*{Step 2. The area enlargeable data}
Fix $\varepsilon>0$ with
\[
\varepsilon < \frac{\delta_0}{c\, m(m-1)} .
\]
Since $g_1$ is area enlargeable, there exist a spin Riemannian covering $\pi:(\widehat M_1,\widehat g_1)\to(M_1,g_1)$ and a smooth map $f:\widehat M_1\to \mathbf{S}^m$ of nonzero degree, area $\varepsilon$-decreasing, and locally constant outside a compact set $K\subset\widehat M_1$; in particular $\supp(df)\subset K$. Passing to a connected component on which $f$ has nonzero degree (a component of a covering is again a covering), we may assume that $\widehat M_1$ is connected.

The ball $V$ is evenly covered; write
\[
\pi^{-1}(V)=\bigsqcup_{p'\in\pi^{-1}(p_0)}V^{(p')},
\]
where $V^{(p')}$ is the sheet containing $p'$, isometric to $V$. The balls $U$ and $B_1$ lift to $U^{(p')}$ and $B_1^{(p')}$ with $B_1^{(p')}\subset U^{(p')}\subset V^{(p')}$.

\subsection*{Step 3. The lifted collapsing map}
Lift the homotopy $H_t\circ\pi$ to $\widehat H_t:\widehat M_1\times[0,1]\to\widehat M_1$ with $\widehat H_0=\id_{\widehat M_1}$ and $\pi\circ\widehat H_t=H_t\circ\pi$. Since $H_t=\id$ outside $V$, uniqueness of lifts, applied componentwise on $\widehat M_1\setminus\pi^{-1}(V)$ and extended to the boundary by continuity, gives $\widehat H_t=\id$ outside $\bigsqcup_{p'\in\pi^{-1}(p_0)}V^{(p')}$. On each sheet $V^{(p')}$, the restriction $\pi|_{V^{(p')}}:V^{(p')}\to V$ is an isometry, and because $H_t(V)\subset V$, the lift is given explicitly by
\[
\widehat H_t|_{V^{(p')}} = (\pi|_{V^{(p')}})^{-1}\circ H_t\circ \pi|_{V^{(p')}},
\]
so in particular $\widehat H_t(V^{(p')})\subset V^{(p')}$. The map $\widehat h:=\widehat H_1$ therefore satisfies
\[
\widehat h=\id \text{ outside } \bigsqcup_{p'}V^{(p')},\qquad
\widehat h(U^{(p')})=\{p'\},\qquad
\Lip_{\widehat g_1}(\widehat h)^2=c.
\]

Set $\widehat f:=f\circ\widehat h:\widehat M_1\to \mathbf{S}^m$. Then:
\begin{itemize}
\item $\widehat f$ is area $c\varepsilon$-decreasing for $\widehat g_1$, since $|\widehat h^{*}\beta|_{\widehat g_1}\le c\,|\beta|$ for $2$-forms;
\item $\widehat f=f$ outside $\bigsqcup_{p'}V^{(p')}$;
\item $\widehat f\equiv f(p')$ on each $U^{(p')}$, hence $d\widehat f=0$ there.
\end{itemize}
Consequently,
\begin{equation}\label{eq:support}
\supp(d\widehat f)\subset \widehat M_1\setminus\bigsqcup_{p'}U^{(p')}.
\end{equation}

Enlarge $V$ within the common chart to an open ball $W$ with $V\Subset W$; its sheets $W^{(p')}$ satisfy $W^{(p')}\cap\pi^{-1}(V)=V^{(p')}$. By Lemma~\ref{lem:finiteness} applied to $V\Subset W$, the compact set $K$ meets only finitely many of the sheets $V^{(p')}$; write $P_K\subset\pi^{-1}(p_0)$ for the corresponding finite set and put
\[
K' := K \cup \bigcup_{p'\in P_K}\overline{V^{(p')}}.
\]
For every $t\in[0,1]$, the map $f\circ\widehat H_t$ is locally constant outside $K'$. Indeed, outside the sheets this follows from $\widehat H_t=\id$ together with the corresponding property of $f$; and if $p'\notin P_K$, then $V^{(p')}\cap K=\emptyset$, so $f$ is constant on the connected set $V^{(p')}$ by Remark~\ref{rem:locconst}, and $\widehat H_t$ maps $V^{(p')}$ into itself. Homotopy invariance of the degree (Remark~\ref{rem:degree}) now yields
\[
\deg(\widehat f)=\deg(f\circ\widehat H_1)=\deg(f\circ\widehat H_0)=\deg(f)\ne0.
\]

\subsection*{Step 4. The covering connected sum}
Form the covering connected sum
\[
\widehat{\mathcal M}:=\Bigl(\widehat M_1\setminus\bigsqcup_{p'}\mathring B_1^{(p')}\Bigr)
\cup\bigsqcup_{p'}\bigl(M_2^{(p')}\setminus\mathring B_2\bigr),
\]
gluing each copy $M_2^{(p')}$ of $M_2\setminus\mathring B_2$ along $\partial B_1^{(p')}$ via the lifted gluing map
\[
\varphi^{(p')}:=(\pi|_{\partial B_1^{(p')}})^{-1}\circ\varphi:
\partial B_2\longrightarrow\partial B_1^{(p')}.
\]
The covering $\pi$ extends to a covering $\widehat\pi:\widehat{\mathcal M}\to\mathcal{M}$: on $\widehat M_1\setminus\bigsqcup_{p'}\mathring B_1^{(p')}$ set $\widehat\pi=\pi$, and on each attached copy $M_2^{(p')}\setminus\mathring B_2$ let $\widehat\pi$ be the identity onto $M_2\setminus\mathring B_2\subset\mathcal M$; the lifted gluing maps make $\widehat\pi$ well defined. Since $\widehat M_1$ is connected and each attached piece is connected, $\widehat{\mathcal M}$ is connected; it is noncompact because it covers the noncompact manifold $\mathcal M$.

Equip $\widehat{\mathcal M}$ with $\widehat g:=\widehat\pi^*g$. As a Riemannian covering of a complete manifold, $(\widehat{\mathcal M},\widehat g)$ is complete, and
\[
\scal_{\widehat g}=\scal_g\circ\widehat\pi\ge\delta_0
\]
on $\widehat{\mathcal M}$. Because $g_1=g$ on $M_1\setminus U$ and $\supp(d\widehat f)$ avoids every $U^{(p')}$ by \eqref{eq:support},
\begin{equation}\label{eq:metrics-agree}
\widehat g|_{\supp(d\widehat f)} = (\pi^*g)|_{\supp(d\widehat f)} = (\pi^*g_1)|_{\supp(d\widehat f)} = \widehat g_1|_{\supp(d\widehat f)}.
\end{equation}

Extend $\widehat f$ to $\widehat{\mathcal M}$ by $\widehat f\equiv f(p')$ on each $M_2^{(p')}\setminus\mathring B_2$; since $\widehat f\equiv f(p')$ already holds on $U^{(p')}\supset B_1^{(p')}$, the extension is smooth. The support of $d\widehat f$ and the degree are unchanged, and the extended map is locally constant outside the compact set $K'\subset\widehat{\mathcal M}$. By Remark~\ref{rem:degree},
\[
\deg_{\widehat{\mathcal M}}(\widehat f)=\deg_{\widehat M_1}(\widehat f)\ne0.
\]

\subsection*{Step 5. Rescaling and the scalar-cowaist inequality}

Fix $\sigma>0$ and set $\tilde g:=\lambda^2\widehat g$ with $\lambda^2:=\frac{\delta_0}{\sigma^2 m(m-1)}$. Then
\[
\scal_{\tilde g}=\lambda^{-2}\scal_{\widehat g}\ge \sigma^2 m(m-1) \quad\text{on }\widehat{\mathcal M}.
\]
The norm of a $2$-form scales by $|\alpha|_{\tilde g}=\lambda^{-2}|\alpha|_{\widehat g}$. Since $\widehat g=\widehat g_1$ on $\supp(d\widehat f)$ by \eqref{eq:metrics-agree} and $d\widehat f=0$ elsewhere, the map $\widehat f$ is area $\lambda^{-2}c\varepsilon$-decreasing for $\tilde g$, where
\[
\lambda^{-2} c \varepsilon < \sigma^2
\]
by the choice of $\varepsilon$ and $\lambda$. Theorem~\ref{thm:shi-cowaist}\,(i), applied to the complete spin manifold $(\widehat{\mathcal M},\tilde g)$ and the map $\widehat f$, therefore gives
\[
\Acw(\widehat{\mathcal M},\tilde g)
\ge \Acw(\widehat{\mathcal M}\,|\,\supp(d\widehat f),\tilde g)
\ge \frac{2}{\lambda^{-2}c\varepsilon} >\frac{2}{\sigma^2}.
\]
On the other hand, Theorem~\ref{thm:shi-cowaist}\,(ii) gives
\[
\Acw(\widehat{\mathcal M},\tilde g) \le \frac{2}{\sigma^2}.
\]
This is a contradiction.

\end{proof}

\begin{remark}\label{rem:alt-proof}
The argument follows the strategy of Wang--Zhang \cite{WZ22}; the new ingredients are the extension of $g$ to a complete metric $g_1$ on the noncompact summand $M_1$ and the completeness lemma (Lemma~\ref{lem:complete-closed}). The nested balls $B_1\Subset U\Subset V$ allow the collapsing map to push $\supp(d\widehat f)$ into the region where $g_1$ and $g$ coincide, so that no further modification of the metric is needed.
\end{remark}

\section{A second proof via Llarull's theorem}\label{sec:alt}

We recall the noncompact version of Llarull's theorem due to Zhang \cite{Zha20}, which gives an alternative route to Theorem~\ref{thm:main}.

\begin{theorem}[\cite{Zha20}]\label{thm:zhang}
Let $(M,g)$ be a complete, noncompact, spin Riemannian manifold of dimension $m$. Let $f:M\to \mathbf{S}^m$ be a smooth area decreasing map which is locally constant at infinity and satisfies $\deg(f)\neq0$.
     \begin{enumerate}
     \item[(a)] If $m$ is even and $\scal_{g}\geq m(m-1)$ on $\supp(df)$, then $\inf_{M}\scal_{g}<0$.
     \item[(b)] If $m$ is odd and $\scal_{g}>m(m-1)$ on $\supp(df)$, then $\inf_{M}\scal_{g}<0$.
     \end{enumerate}
\end{theorem}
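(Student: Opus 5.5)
The plan is to argue by contradiction with a twisted, deformed Dirac operator, following Llarull's estimate together with a relative-index device. Suppose $\inf_M\scal_g\ge0$. First treat $m$ even. Let $E_0=S(\mathbf S^m)$ be the spinor bundle of the round sphere with its Levi-Civita connection, and set $E=f^*E_0$ with the pulled-back connection. Since $f$ is locally constant outside a compact set $K$, $E$ is canonically flat and trivialized on each component of $M\setminus K$. Hence $E$ agrees at infinity with the trivial bundle $F$ of the same rank, where $f$ is replaced by the locally constant values. The formal difference $[E]-[F]$ is then a compactly supported class, and the Gromov--Lawson relative index theorem gives
\[
\operatorname{ind}(D_E)-\operatorname{ind}(D_F)=\int_M \widehat A(TM)\,\bigl(\mathrm{ch}(E)-\mathrm{ch}(F)\bigr)=\deg(f)\,\chi(\mathbf S^m)\cdot(\pm1)\neq0 .
\]
By Remark~\ref{rem:degree} only the top-degree term of $\mathrm{ch}$ survives. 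This makes sense only once we know the operators are Fredholm. That is the first issue, since uniform positivity at infinity is not assumed.

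Second, we need the pointwise Lichnerowicz estimate. On $S(M)\otimes E$ we have $D_E^2=\nabla^*\nabla+\tfrac14\scal_g+\mathcal R^E$. Llarull's algebraic lemma for area decreasing maps to $\mathbf S^m$ gives $\mathcal R^E\ge-\tfrac{m(m-1)}4$ on $\supp(df)$, and $\mathcal R^E=0$ off $\supp(df)$. Thus $D_E^2\ge\nabla^*\nabla+\tfrac14(\scal_g-m(m-1))\chi_{\supp df}+\tfrac14\scal_g$, which is $\ge\nabla^*\nabla$ under our hypotheses. Nonnegativity alone does not give Fredholmness or kill the index. Following Zhang, I would therefore not use the relative index on all of $M$. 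Instead I would work on a large compact domain $\Omega\supset K$ with smooth boundary, where the problem becomes a boundary value problem. On $\Omega$ I use the $\mathbf Z_2$-graded bundle $E\oplus F$ with a deformed operator $D_{E\oplus F}+\tfrac{1}{\epsilon}\,\psi\,V$. Here $V$ is the odd self-adjoint endomorphism identifying $E$ and $F$ over $\Omega\setminus K$ (parallel there), and $\psi$ is a cutoff that vanishes near $K$ and equals $1$ near $\partial\Omega$. The deformation makes the operator invertible near the boundary, so the index on $\Omega$ with, e.g., APS or local elliptic boundary conditions equals the relative index $\deg(f)\chi(\mathbf S^m)\ne0$. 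The cross term $\tfrac1\epsilon c(d\psi)V$ is controlled by choosing $\psi$ with $|d\psi|$ tiny. This is where noncompactness helps: $\Omega$ can be taken huge, so $|d\psi|\le\eta$ for any $\eta>0$.

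The main obstacle is the borderline case $\scal_g\ge m(m-1)$ on $\supp df$ combined with only $\scal_g\ge0$ elsewhere. The quadratic form is then merely nonnegative, and the small error $\eta/\epsilon$ from the deformation must be absorbed. My first attempt is a perturbation of the inequality rather than of the operator. One can rescale $f$ by composing with a degree-one map $\mathbf S^m\to\mathbf S^m$ that is $(1-\delta)$-area contracting on the relevant region. This is impossible globally, so instead I would use the hypothesis $\inf\scal_g<0$ negation in a quantitative way: from a kernel element, the Bochner identity integrated over $\Omega$ forces $\nabla s=0$ and $\mathcal R^E s=-\tfrac{m(m-1)}4 s$. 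A parallel section with $\scal\ge0$ must then be in the kernel of the pairing with $V$ near $\partial\Omega$, contradicting invertibility there. If that rigidity route fails, I would fall back on Zhang's choice of $\psi$ depending on $\epsilon$ with $\epsilon\to0$ and a limiting argument producing an $L^2$ parallel section on noncompact $M$, hence zero (infinite volume), contradicting nonvanishing index.

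For $m$ odd, I would pass to $M\times\mathbf S^1_R$ (or $M\times\mathbf R$) with the map $f\wedge$ (degree-one map) into $\mathbf S^{m+1}$, suitably suspended. The product metric satisfies the even-dimensional hypothesis only up to a factor that tends to $1$ as $R\to\infty$. This is exactly why the strict inequality $\scal_g>m(m-1)$ on the compact set $\supp(df)$ is needed: it leaves a positive margin absorbing the loss. Part (a) applied on the product then yields $\inf\scal<0$, giving (b).
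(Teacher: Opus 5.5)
In the even case your sketch has a genuine gap, exactly at the step you call the main obstacle, and none of your remedies closes it. (The paper does not prove this statement; it quotes it from \cite{Zha20}, so I judge your argument on its own terms.)

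\textbf{Where the argument fails.} You switch the potential on far out, on a huge $\Omega$, where only $\scal_g\ge0$ is known. There the Weitzenb\"ock potential of $(D+\frac1\epsilon\psi V)^2$ is bounded below only by $\frac14\scal_g+\frac{\psi^2}{\epsilon^2}-\frac{|d\psi|}{\epsilon}$. This bound can be negative wherever $\psi^2<\epsilon|d\psi|$. Such points exist for every cutoff and every $\epsilon$: if $\psi^2\ge\epsilon|d\psi|$ held wherever $\psi>0$, then $1/\psi$ would be $\epsilon^{-1}$-Lipschitz there, which is impossible where $\psi\to0$. So making $|d\psi|\le\eta$ small by enlarging $\Omega$ does not help. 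Your fallbacks do not repair this:
\begin{itemize}
\item The ``rigidity route'' presupposes that the integrated Bochner identity forces $\nabla s=0$. That is exactly what the unsigned cross term (and, on a domain with boundary, the boundary terms) prevents.
\item The limiting argument is unspecified; nothing stops normalized kernel sections from escaping to infinity.
\item ``$L^2$ and parallel, hence zero by infinite volume'' is not valid. Complete noncompact manifolds need not have infinite volume; for $m\ge3$, ends $dt^2+e^{-2t}g_{\mathbf S^{m-1}}$ with $t\gg1$ have finite volume and positive scalar curvature. A parallel section dies because it vanishes on an open set.
\end{itemize}
The odd case has a separate problem. Part (a) cannot be applied to $M\times\mathbf S^1_R$ as a black box. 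In dimension $m+1$ it requires $\scal\ge(m+1)m$ on the support, while the product only has $\scal>m(m-1)$, and this discrepancy does not tend to $1$. Moreover, the suspended map is in general not locally constant at infinity; on $M\times\mathbf R$, for instance, its $\mathbf R$-derivative is nonzero on all of $(M\setminus\supp(df))\times[-R,R]$.

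\textbf{The missing idea.} Let the hypothesis on $\supp(df)$ supply the room. You already noted $\mathcal R^E=0$ off $\supp(df)$, but you never used that $\scal_g$ is positive near $\supp(df)$.
\begin{itemize}
\item Since $\supp(df)$ is compact and $\scal_g\ge m(m-1)$ there, $\scal_g\ge c_0>0$ on some neighbourhood $N$ of $\supp(df)$.
\item On $N\setminus\supp(df)$ we have $df=0$, so $\mathcal R^E=0$ and $E$ is parallelly identified with the trivial bundle on each component of $M\setminus\supp(df)$.
\item Put the whole transition of $\psi$ inside $N\setminus\supp(df)$: $\psi=0$ near $\supp(df)$, $\psi=1$ off $N$, $|d\psi|\le C$.
\item Make the coefficient of the potential small rather than large: take $D_\delta=D+\delta\psi\widehat V$ with $\delta C<c_0/8$.
\end{itemize}
Then the potential is $\ge\frac14(\scal_g-m(m-1))\ge0$ on $\supp(df)$ by Llarull's estimate. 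It is $\ge c_0/8$ on $N\setminus\supp(df)$. Off $N$ it is $\ge\frac14\scal_g+\delta^2\ge\delta^2$, which is the only place where $\inf_M\scal_g\ge0$ is used. Hence $D_\delta$ is a Callias-type operator, Fredholm on all of $M$, with no compact domain or boundary condition needed. Its index is $\pm\deg(f)\chi(\mathbf S^m)\neq0$ by the relative index theorem. On the other hand, every $L^2$ kernel element is parallel and vanishes on the nonempty open set $M\setminus\supp(df)$, hence is zero, which is the contradiction.

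For odd $m$, rerun this argument on the product rather than citing (a). Use Llarull's refined bound $\mathcal R\ge-\frac14\sum_{i\ne j}\mu_i\mu_j$ for the suspended map; its circle-direction singular value is $O(1/R)$, and $|df|$ is bounded. The strict inequality on the compact set $\supp(df)$ absorbs the resulting $O(1/R)$ error. The twisted bundle at infinity must be handled directly: it is flat there for rank reasons, but it is not automatically identified with a trivial bundle.
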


\begin{remark}
In odd dimensions, the strict inequality in Theorem~\ref{thm:zhang}\,(b) can be relaxed to a non-strict one by the spectral flow method of Li--Su--Wang--Zhang \cite{LSWZ24+}. This refinement is not needed here, since the construction below produces a strict inequality.
\end{remark}

\begin{proof}[Proof of Theorem~\ref{thm:main} via Llarull's theorem]
The geometric construction is that of Section~\ref{sec:proof-main}; only the final step changes. Suppose that $g$ has uniformly positive scalar curvature, $\scal_g\ge\delta_0>0$. Fix $\sigma>0$ and set
\[
\lambda^2:=\frac{\delta_0}{m(m-1)+\sigma}.
\]
Choose $\varepsilon>0$ such that
\[
\varepsilon \le \frac{\delta_0}{c\,(m(m-1)+\sigma)} = \frac{\lambda^2}{c}.
\]
Carry out Steps~1--4 of Section~\ref{sec:proof-main} with this $\varepsilon$; this is possible because the only requirement in Step~2 is $\varepsilon<\frac{\delta_0}{c m(m-1)}$, and the present choice is strictly smaller. This produces a complete noncompact spin Riemannian covering
\[
\widehat\pi:(\widehat{\mathcal M},\widehat g)\longrightarrow(\mathcal M,g),
\]
a smooth map $\widehat f:\widehat{\mathcal M}\to\mathbf S^m$, locally constant at infinity, of nonzero degree, and a constant $c>0$ such that $\widehat f$ is $c\varepsilon$-area decreasing for $\widehat g_1$ on $\supp(d\widehat f)$, where $\widehat g_1=\pi^*g_1$ coincides with $\widehat g$ on $\supp(d\widehat f)$ by \eqref{eq:metrics-agree}.

Under the rescaling $\tilde g:=\lambda^2\widehat g$ we have
\[
\scal_{\tilde g}=\lambda^{-2}\scal_{\widehat g}\ge\lambda^{-2}\delta_0=m(m-1)+\sigma>m(m-1)
\]
everywhere on $\widehat{\mathcal M}$. On $\supp(d\widehat f)$, for any $\alpha\in\Omega^2(\mathbf S^m)$,
\[
|\widehat f^*\alpha|_{\tilde g}
=\lambda^{-2}|\widehat f^*\alpha|_{\widehat g_1}
\le \lambda^{-2}c\varepsilon\,|\alpha|,
\]
and $\lambda^{-2}c\varepsilon\le 1$ by the choice of $\varepsilon$; since $d\widehat f=0$ outside $\supp(d\widehat f)$, the map $\widehat f$ is area decreasing for $\tilde g$. Theorem~\ref{thm:zhang} therefore applies to $(\widehat{\mathcal M},\tilde g)$ and $\widehat f$, and yields
\[
\inf_{\widehat{\mathcal M}}\scal_{\tilde g}<0,
\]
contradicting $\scal_{\tilde g}\ge m(m-1)+\sigma>0$.
\end{proof}

\section{Proof of Theorem~\ref{thm:main_spectral}}\label{sec:proof-spectral}

The proof follows the same lines as that of Theorem~\ref{thm:main}, with the spectral-cowaist inequality in place of the scalar-cowaist inequality.

\begin{proof}
Suppose $\mathcal{M}:=M_1\#M_2$ admits a complete metric $g$ with $\Lambda_{\gamma}(g)>0$ for some $\gamma>\frac{m-1}{4m}$. Recall that the constant $c$ of Step~1 depends on the collapsing map $h$ and the metric $g_1$ only. Choose $\varepsilon>0$ such that
\[
\varepsilon < \frac{\Lambda_{\gamma}(g)}{\gamma\, c\, m(m-1)},
\]
and let $\widehat{\mathcal M}$, $\widehat g$, $\widehat f$ be the data produced by Steps~1--4 of Section~\ref{sec:proof-main}; no rescaling is needed here.

Since $\widehat g=\widehat g_1$ on $\supp(d\widehat f)$ by \eqref{eq:metrics-agree}, the map $\widehat f$ is $c\varepsilon$-area decreasing for $\widehat g$ on $\supp(d\widehat f)$, while $d\widehat f=0$ outside $\supp(d\widehat f)$; hence $\widehat f:(\widehat{\mathcal M},\widehat g)\to\mathbf S^m$ is $c\varepsilon$-area decreasing. As $\widehat f$ has nonzero degree and is locally constant at infinity, Theorem~\ref{thm:shi-cowaist}\,(i) gives
\[
\Acw(\widehat{\mathcal M}) \ge \Acw(\widehat{\mathcal M}\,|\,\supp(d\widehat f)) \ge \frac{2}{c\varepsilon}.
\]

Proposition~\ref{prop:spectrum-covering}, applied to the Riemannian covering $\widehat\pi:(\widehat{\mathcal M},\widehat g)\to(\mathcal M,g)$, gives
\[
\Lambda_\gamma(\widehat g)\ge \Lambda_\gamma(g).
\]
Combined with the choice of $\varepsilon$, this yields
\[
\Acw(\widehat{\mathcal M}) \ge \frac{2}{c\varepsilon} > \frac{2\gamma m(m-1)}{\Lambda_{\gamma}(g)} \ge \frac{2\gamma m(m-1)}{\Lambda_{\gamma}(\widehat g)}.
\]

On the other hand, the spectral-cowaist inequality for the relative $\widehat{A}$-cowaist (Theorem~\ref{thm:spectral-cowaist}) gives
\[
\Acwp(\widehat{\mathcal M}) \le \frac{2\gamma m(m-1)}{\Lambda_{\gamma}(\widehat g)}.
\]
Since $\Acw(\widehat{\mathcal M})\le\Acwp(\widehat{\mathcal M})$ by \eqref{eq:cowaist-comparison}, this contradicts the previous displayed inequality. Hence $\mathcal{M}$ carries no complete metric with positive $\gamma$-spectral constant.
\end{proof}

\section*{Acknowledgements}
The author is grateful to Professor Weiping Zhang, Professor Guangxiang Su, Professor Zhenlei Zhang, and Professor Bo Liu for their encouragement and support.
This work is partially supported by the National Natural Science Foundation of China (12501064), the China Postdoctoral Science Foundation (2025M773075, 2025T002TJ, GZC20252016), and the Nankai Zhide Foundation.


\begin{thebibliography}{99}
\bibitem[BHHSZ26]{BHHSZ26} Y.\ Bi, T.\ Hao, S.\ He, Y.\ Shi, and J.\ Zhu, A proof for the Riemannian positive mass theorem up to dimension 19, preprint, arXiv:2603.02769 [math.DG] (2026).
\bibitem[BMP18]{BMP18} W.\ Ballmann, H.\ Matthiesen, and P.\ Polymerakis, On the bottom of spectra under coverings, \textit{Math.\ Z.}\ \textbf{288} (2018), no.~3-4, 1029--1036.
\bibitem[CL24]{CL24} O.\ Chodosh and C.\ Li, Generalized soap bubbles and the topology of manifolds with positive scalar curvature, \textit{Ann.\ of Math.\ (2)} \textbf{199} (2024), no.~2, 707--740.
\bibitem[CMSW25]{CMSW25} O.\ Chodosh, C.\ Mantoulidis, F.\ Schulze, and Z.\ Wang, Generic regularity for minimizing hypersurfaces in dimension 11, preprint, arXiv:2506.12852 [math.DG] (2025).
\bibitem[CS21]{CS21} S.\ Cecchini and T.\ Schick, Enlargeable metrics on nonspin manifolds, \textit{Proc.\ Amer.\ Math.\ Soc.}\ \textbf{149} (2021), no.~5, 2199--2211.
\bibitem[CS26]{CS26} X.\ Chai and Y.\ Sun, Some rigidity theorems for spectral curvature bounds, preprint, arXiv:2604.04052 [math.DG] (2026).
\bibitem[GL80]{GL80} M.\ Gromov and H.\ B.\ Lawson, Spin and scalar curvature in the presence of a fundamental group.\ I, \textit{Ann.\ of Math.\ (2)} \textbf{111} (1980), no.~2, 209--230.
\bibitem[GL83]{GL83} M.\ Gromov and H.\ B.\ Lawson, Positive scalar curvature and the Dirac operator on complete Riemannian manifolds, \textit{Publ.\ Math.\ Inst.\ Hautes \'Etudes Sci.}\ \textbf{58} (1983), 83--196.
\bibitem[Hat02]{Hat02} A.\ Hatcher, \emph{Algebraic Topology}, Cambridge Univ.\ Press, Cambridge, 2002.
\bibitem[HKKZ24]{HKKZ24} S.\ Hirsch, D.\ Kazaras, M.\ Khuri, and Y.\ Zhang, Spectral torical band inequalities and generalizations of the Schoen--Yau black hole existence theorem, \textit{Int.\ Math.\ Res.\ Not.\ IMRN} \textbf{2024} (2024), no.~4, 3139--3175.
\bibitem[Liu26+]{Liu26+} D.\ Liu, A sharp inequality relating scalar curvature, bottom spectrum, and the relative $\widehat{A}$-cowaist on complete manifolds, preprint, arXiv:2603.20864v2 [math.DG] (2026).
\bibitem[LSWZ24+]{LSWZ24+} Y.\ Li, G.\ Su, X.\ Wang, and W.\ Zhang, Llarull's theorem on odd dimensional manifolds: the noncompact case, preprint, arXiv:2404.18153 [math.DG] (2024).
\bibitem[LUY24]{LUY24} M.\ Lesourd, R.\ Unger, and S.-T.\ Yau, Positive scalar curvature on noncompact manifolds and the Liouville theorem, \textit{Comm.\ Anal.\ Geom.}\ \textbf{32} (2024), no.~5, 1311--1337.
\bibitem[Mil65]{Mil65} J.\ Milnor, \emph{Topology from the Differentiable Viewpoint}, Univ.\ Press of Virginia, Charlottesville, 1965.
\bibitem[Shi25]{Shi25} P.\ Shi, Spectral flow of Callias operators, odd $\mathrm{K}$-cowaist, and positive scalar curvature, \textit{Adv.\ Math.}\ \textbf{479} (2025), paper No.~110429, 41 pp.
\bibitem[Su26]{Su26} G.\ Su, A remark on $\Lambda^2$-enlargeable manifolds, \textit{Sci.\ China Math.}\ (2026), DOI: 10.1007/s11425-025-2590-7; arXiv:2510.18329 [math.DG].
\bibitem[SY79]{SY79} R.\ Schoen and S.-T.\ Yau, On the structure of manifolds with positive scalar curvature, \textit{Manuscripta Math.}\ \textbf{28} (1979), no.~1-3, 159--183.
\bibitem[WZ22]{WZ22} X.\ Wang and W.\ Zhang, On the generalized Geroch conjecture for complete spin manifolds, \textit{Chinese Ann.\ Math.\ Ser.\ B}\ \textbf{43} (2022), no.~6, 1143--1146.
\bibitem[Zha20]{Zha20} W.\ Zhang, Nonnegative scalar curvature and area decreasing maps, \textit{SIGMA} \textbf{16} (2020), 033, 7 pages.
\end{thebibliography}
\end{document}